\pgfplotsset{compat=1.10}
\newtheorem{thm}{Theorem}[section]
\newtheorem{lem}[thm]{Lemma}
\newtheorem{cor}[thm]{Corollary}
\newtheorem{prop}[thm]{Proposition}
\DeclareMathAlphabet{\mathpzc}{OT1}{pzc}{m}{it}
\numberwithin{equation}{section}
\newcommand{\R}{\mathbb{R}}
\newcommand{\ve}{\varepsilon}
\newcommand{\rd}{\mathrm{d}}
\newcommand{\bqn}{\begin{equation}}
\newcommand{\eqn}{\end{equation}}
\newcommand{\bean}{\begin{eqnarray}}
\newcommand{\eean}{\end{eqnarray}}
\DeclareMathAlphabet{\mathpzc}{OT1}{pzc}{m}{it}
\newtheorem{theorem}{Theorem}[section]
\newtheorem{remark}[theorem]{Remark}
\numberwithin{equation}{section}
\title[Touchdown is the Only Finite Time Singularity in a 3D MEMS Model]{Touchdown is the Only Finite Time Singularity in a Three-Dimensional MEMS Model}
\author{Philippe Lauren\c{c}ot}
\address{Institut de Math\'ematiques de Toulouse, UMR~5219, Universit\'e de Toulouse, CNRS \\ F--31062 Toulouse Cedex 9, France}
\email{laurenco@math.univ-toulouse.fr}
\thanks{Partially supported by the CNRS Projet International de Coop\'eration Scientifique PICS07710}
\author{Christoph Walker}
\address{Leibniz Universit\"at Hannover\\ Institut f\" ur Angewandte Mathematik \\ Welfengarten 1 \\ D--30167 Hannover\\ Germany}
\email{walker@ifam.uni-hannover.de}
\begin{document}

\date{\today}

\maketitle

\begin{abstract}
Touchdown is shown to be the only possible finite time singularity that may take place in a free boundary problem modeling a three-dimensional microelectromechanical system. The proof relies on the energy structure of the problem and uses smoothing effects of the semigroup  generated in $L_1$ by the bi-Laplacian with clamped boundary conditions.
\end{abstract}

\section{Introduction}

We consider a model for a three-dimensional microelectromechanical system (MEMS) including two components, a rigid ground plate of shape $D\subset \mathbb{R}^2$ and an elastic plate of the same shape (at rest) which is suspended above the rigid one and clamped on its boundary, see Figure~\ref{fig1}.  Both plates being conducting, holding them at different voltages generates a Coulomb force across the device. This, in turn, induces a deformation of the elastic plate, thereby modifying the geometry of the device and transforming electrostatic energy into mechanical energy. When applying a sufficiently large voltage difference, a well-known phenomenon that might occur is that the two plates come into contact; that is, the elastic plate touches down on the rigid plate. For this feature~--~usually referred to as \textit{pull-in instability} or \textit{touchdown} \cite{FMCCS05, PeBe03} -- some mathematical models have been developed recently \cite{BGP00, FMCCS05, LW18, PeBe03, Pel01a}.  Since the pioneering works \cite{BGP00, FMPS06, GPW05, Pel01a}, their mathematical analysis has been the subject of numerous papers. We refer to \cite{EGG10, LW17} for a more complete account and an extensive list of references.

\begin{figure}\label{fig1}
	\begin{tikzpicture}
	\draw[blue, line width = 2pt] plot[domain=-5:-1] (\x,{-1-cos((pi*(\x+1)/4) r)});
	\draw[blue, line width = 2pt] plot[domain=-1:1] (\x,{-2.5+ 0.5*cos((pi*(\x+1)/2) r)});
	\draw[blue, line width = 2pt] plot[domain=1:5] (\x,{-1.5+1.5*sin((pi*(\x-3)/4) r)});
	\draw[black, line width = 1.5pt] (-5,0)--(-5,-5);
	\draw[black, line width = 3pt] (-5,-5)--(5,-5);
	\draw[black, line width = 1.5pt] (5,-5)--(5,0);
	\draw[black, line width = 1pt, dashed] (-6,0)--(0,0);
	\draw[black, line width = 1pt, dashed, |->] (0,0)--(6,0);
	\node at (5.8,0.3) {$x_1$};
	\node at (5,0.3) {$1$};
	\node at (-5.1,0.3) {$-1$};
	\draw[black, line width = 0.5pt, dashed, ->] (0,0)--(1,0.75);
	\node at (1,0.45) {{\small $x_2$}};
	\draw[black, line width = 1pt, arrows=<-] (-1.4,-1.95)--(-1.4,0);
	\node at (-1.1,-0.9) {$u$};
	\node at (-2,-3.5) {$\Omega(u)$};
	\draw (3.5,-5.75) edge[->, bend left, line width = 1pt] (2,-5.075);
	\node at (4.85,-5.75) {\textbf{ground plate} $D$};
	\draw ((6,-1) edge[->,bend left, line width = 1pt] (4,-0.45);
	\node at (7.05,-1) {\textbf{elastic plate}};
	\draw[black, line width = 1pt, dashed, arrows = |->] (0,0)--(0,1);
	\node at (-0.3,0.25) {$0$};
	\draw[black, line width = 1pt, dashed] (0,-5)--(0,0);
	\node at (-0.3,0.9) {$z$};
	\end{tikzpicture}
	\caption{Cross section of an idealized MEMS device}
\end{figure}
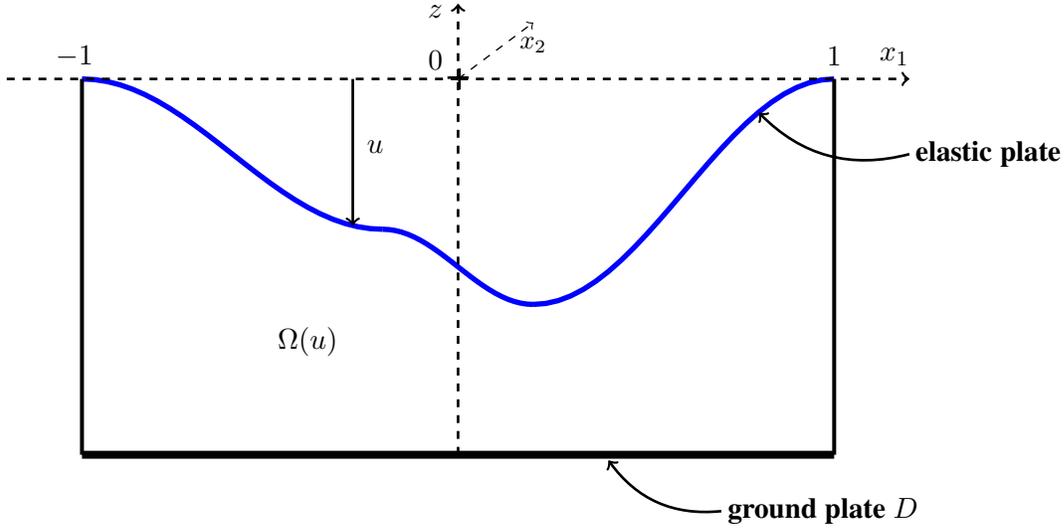

We focus here on a model describing the evolution of the vertical deformation of the elastic plate from rest and the electrostatic potential between the plates. More precisely, we assume that $D$ is a bounded and convex domain in $\R^2$  with a $C^\infty$-smooth boundary. Then, after an appropriate rescaling and neglecting inertial forces, the ground plate is located at $z=-1$ while the elastic plate's rest position is at $z=0$, and the evolution of the vertical deformation $u=u(t,x)$ of the elastic plate at time $t>0$ and position $x\in D$ is given by 
\begin{subequations}\label{u}
\begin{equation}\label{u1}
\partial_t u+\beta\Delta^2 u-\big(\tau+a\|\nabla u\|_{L_2(D)}^2\big)\Delta u = - \lambda\, g(u) \ ,\quad x\in D\ ,\quad t>0\,,
\end{equation}
where
\begin{equation}\label{u2}
g(u(t))(x) := \varepsilon^2 |\nabla\psi_{u(t)}(x,u(t,x))|^2 + |\partial_z \psi_{u(t)}(x,u(t,x))|^2 \ ,\quad x\in D\ ,\quad t>0\,.
\end{equation}
Throughout the paper, $\nabla$ and $\Delta$ denote the gradient and the Laplace operator with respect to $x\in D$, respectively. 
We supplement \eqref{u1} with clamped boundary conditions
\begin{equation}\label{bcu}
u=\partial_\nu u=0\,  ,\quad x\in \partial D\ , \quad t>0\,,
\end{equation}
and initial condition
\begin{equation}\label{ic}
u(0,x)=u^0(x)\ ,\quad x\in D\ .
\end{equation}
\end{subequations}
As for the electrostatic potential $\psi_{u(t)}(x,z)$, it is defined for $t>0$ and $(x,z)\in \Omega(u(t))$, where $\Omega(u(t))$ is the three-dimensional cylinder
$$
\Omega(u(t)) := \left\{ (x,z)\in D\times (-1,\infty)\ :\ -1 < z < u(t,x) \right\}
$$
 enclosed within the rigid ground plate at $z=-1$ and the deflected elastic plate at $z=u(t)$.
For each time $t>0$, the electrostatic potential $\psi_{u(t)}$ solves the rescaled Laplace equation
\begin{subequations}\label{psi0}
\begin{equation}\label{psi}
\varepsilon^2\Delta\psi_{u(t)} + \partial_z^2\psi_{u(t)} =0\ ,\quad (x,z)\in \Omega(u(t))\ ,\quad t>0\ ,
\end{equation}
supplemented with non-homogeneous Dirichlet boundary conditions
\begin{equation}\label{bcpsi}
\psi_{u(t)}(x,z)=\frac{1+z}{1+u(t,x)}\ ,\quad (x,z)\in  \partial\Omega(u(t))\,, \quad t>0 \  .
\end{equation}
\end{subequations}

In \eqref{u}-\eqref{psi0}, the aspect ratio $\varepsilon>0$ is the ratio between vertical and horizontal dimensions of the device while $\lambda>0$ is proportional to the square of the applied voltage difference. The parameters $\beta>0$, $\tau\ge 0$, and $a\ge 0$ result from the modeling of the mechanical forces and are related to bending and stretching of the elastic plate, respectively. We emphasize that \eqref{u}-\eqref{psi0} is a nonlinear and nonlocal system of partial differential equations featuring a time-varying boundary, which makes its analysis rather involved. Still, its local in time well-posedness can be shown in a suitable functional setting, as we recall below, and the aim of this note is to improve the criterion for global existence derived in \cite{LW16a}.

\section{Main Result}

Expanding upon the above discussion on global existence we recall the following result established in \cite[Theorem~1.1]{LW16a}.

\begin{theorem}\label{A}
Let $4\xi\in (7/3,4)$, and consider an initial value $u^0\in  W_{2}^{4\xi}(D)$ such that $u^0>-1$ in $D$ and $u^0=\partial_\nu u^0=0$ on $\partial D$. 

\begin{itemize}

\item[(i)] There is a unique solution $u$ to \eqref{u} on the maximal interval of existence $[0,T_m)$ in the sense that
\begin{equation}\label{reg}
u\in C\big([0,T_m), W_{2}^{4\xi}(D)\big) \cap C\big((0,T_m), W_{2}^4(D)\big) \cap C^1\big((0,T_m),L_2(D)\big) 
\end{equation}
satisfies \eqref{u} together with
$$
u(t,x)>-1\ ,\quad (t,x)\in [0,T_m)\times D\ , 
$$ 
and $\psi_{u(t)}\in W_2^2\big(\Omega(u(t))\big)$ solves \eqref{psi0} in $\Omega(u(t))$ for each $t\in [0,T_m)$. 

\item[(ii)] If $T_m<\infty$, then
\begin{equation}\label{U2}
\lim_{t\to T_m} \|u(t)\|_{W_2^{4\xi}(D)} = \infty \qquad\text{ or }\qquad \lim_{t\to T_m} \min_{x\in \bar{D}} u(t,x) = - 1\,.
\end{equation}
\end{itemize}
\end{theorem}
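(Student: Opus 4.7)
The plan is to recast the coupled problem \eqref{u}-\eqref{psi0} as an abstract semilinear Cauchy problem
$$
\dot{u} + \beta \Delta^2 u = F(u), \qquad u(0) = u^0,
$$
on $L_2(D)$, with $F(u) := (\tau + a\|\nabla u\|_{L_2(D)}^2)\Delta u - \lambda g(u)$, and then invoke the standard theory of semilinear equations driven by analytic semigroups. The bi-Laplacian $\beta\Delta^2$ equipped with clamped boundary conditions is well-known to be sectorial on $L_2(D)$ with domain $\{v\in W_2^4(D):v=\partial_\nu v=0\text{ on }\partial D\}$; interpolation identifies the fractional power domain of order $\xi$ with the closed subspace of $W_2^{4\xi}(D)$ encoding the boundary conditions (using that $4\xi>2$). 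Local existence, uniqueness, and the blow-up alternative will then follow at once, provided $F$ is shown to map the open subset
$$
\mathcal{S}_\xi := \{v\in W_2^{4\xi}(D):\, v>-1\text{ in }D,\ v=\partial_\nu v=0\text{ on }\partial D\}
$$
into $L_2(D)$ locally Lipschitz continuously.

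The Burgers-type contribution is easy: $v\mapsto \|\nabla v\|_{L_2(D)}^2$ is a smooth quadratic functional on $W_2^{4\xi}(D)$, and $\Delta v\in L_2(D)$ is controlled by $\|v\|_{W_2^{4\xi}(D)}$ since $4\xi\geq 2$. The substance of the proof lies in analyzing $g(u)$. I would transform the free boundary problem \eqref{psi0} to the fixed cylinder $\Omega:=D\times(0,1)$ via the rescaling $(x,z)\mapsto (x,(1+z)/(1+u(x)))$, converting \eqref{psi} into an elliptic equation in divergence form on $\Omega$ whose coefficients depend polynomially on $u$, $\nabla u$, and on the reciprocal $1/(1+u)$. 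Because $D\subset\R^2$ and $4\xi>7/3>2$, the Sobolev embedding $W_2^{4\xi}(D)\hookrightarrow C^1(\bar D)$ holds, so on any subset of the form $\{v\in\mathcal{S}_\xi:\|v\|_{W_2^{4\xi}(D)}\leq R,\ \min_{\bar D}v\geq -1+\delta\}$ these coefficients are uniformly bounded and Hölder continuous, and the operator is uniformly elliptic. Lax--Milgram then yields a unique variational solution, which standard $W_2^2$-elliptic regularity lifts to $\tilde\psi_u\in W_2^2(\Omega)$, and transporting back gives $\psi_{u(t)}\in W_2^2(\Omega(u(t)))$. Trace theorems applied to $\nabla\psi_u$ evaluated at $z=u(x)$ then place $g(u)$ in $L_2(D)$, and a perturbation argument comparing the transformed problems for two close functions $u_1,u_2\in\mathcal{S}_\xi$ on the common reference cylinder $\Omega$ yields the sought local Lipschitz estimate for $g$.

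With $F:\mathcal{S}_\xi\to L_2(D)$ shown to be locally Lipschitz, the abstract theory (e.g. Amann, Lunardi, or Henry) provides a unique maximal solution with the regularity asserted in \eqref{reg}. Part (ii) then follows from the standard dichotomy for maximal solutions of semilinear parabolic equations on an open set: if $T_m<\infty$, either $\|u(t)\|_{W_2^{4\xi}(D)}$ is unbounded as $t\to T_m$, or $u(t)$ escapes every compact subset of $\mathcal{S}_\xi$. The clamped boundary conditions are propagated by the flow, so in the latter case only the constraint $v>-1$ can be violated in the limit, giving $\min_{\bar D}u(t)\to -1$.

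The main obstacle is the Lipschitz analysis of $g$ with constants uniform on sets controlled by $R$ and $\delta$. The root of the difficulty is that the physical domains $\Omega(u_1)$ and $\Omega(u_2)$ are genuinely different, and one cannot directly compare $\psi_{u_1}$ with $\psi_{u_2}$; the reduction to the fixed cylinder $\Omega$ is what makes the estimate tractable, but it demands careful book-keeping of how the transformed coefficients, together with the trace at $z=u(x)$, depend on $u$, $\nabla u$, and $1/(1+u)$, and this is where the full strength of the embedding $W_2^{4\xi}(D)\hookrightarrow C^1(\bar D)$ (hence the hypothesis $4\xi>7/3$) is used.
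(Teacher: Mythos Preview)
The paper does not itself prove Theorem~\ref{A}; it is quoted from \cite[Theorem~1.1]{LW16a}. Your outline is indeed the strategy used there: flatten $\Omega(u)$ to the fixed cylinder $\Omega=D\times(0,1)$, show that $u\mapsto g(u)$ is locally Lipschitz from an open subset of $W_2^{4\xi}(D)$ into $L_2(D)$, and then appeal to semilinear parabolic theory for analytic semigroups.

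Two points deserve correction, however. First, you invoke ``standard $W_2^2$-elliptic regularity'' to lift $\tilde\psi_u$ into $W_2^2(\Omega)$, but the paper warns immediately after stating Theorem~\ref{A} that this step is \emph{not} standard: even the reference cylinder $\Omega=D\times(0,1)$ is only Lipschitz (it has edges along $\partial D\times\{0\}$ and $\partial D\times\{1\}$), and after the change of variables the operator has merely bounded, non-Lipschitz coefficients. The $W_2^2$-regularity of $\psi_u$ is singled out in the paper as ``one of the cornerstones'' of \cite{LW16a}, and it is here that the convexity hypothesis on $D$ (hence on $\Omega$) is actually used. This is a genuine gap in your sketch rather than a routine citation.

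Second, your explanation of the threshold $4\xi>7/3$ is off: in two dimensions the embedding $W_2^{4\xi}(D)\hookrightarrow C^1(\bar D)$ already holds once $4\xi>2$. The condition $4\xi>7/3$ is precisely what guarantees $W_2^{4\xi}(D)\hookrightarrow W_3^2(D)$, and indeed Proposition~\ref{P2.1} (and the set $\mathcal{S}(\kappa)$) is formulated for $v\in W_3^2(D)$. It is this $L_3$-control on the Hessian of $u$ that the $W_2^2$-regularity argument for $\psi_u$ and the Lipschitz estimate for $g$ actually consume, not merely the $C^1$-embedding.
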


It is worth pointing out that, since $\Omega(u(t))$ is only a Lipschitz domain, the $W_2^2$-regularity of $\psi_{u(t)}$ does not seem to follow from standard elliptic theory. Actually, this property is one of the cornerstones in the proof of Theorem~\ref{A} and guarantees that the function $g$ in \eqref{u2} is well-defined (see Proposition~\ref{P2.1} below).

Further results regarding \eqref{u}-\eqref{psi0} are to be found in  \cite{LW16a}. In particular, global existence holds true under additional smallness assumptions on both $\lambda$ and $u^0$. Moreover, stationary solutions exist for small values of $\lambda$ and, when $D$ is a ball in $\mathbb{R}^2$, no stationary solution exists for $\lambda$ large enough. This last property is actually connected with the touchdown phenomenon already alluded to in the introduction. In the same vein, whether a finite time singularity may occur for the evolution problem for suitable choices of $\lambda$ and $u^0$ is yet an open problem, though such a feature is expected on physical grounds.

Coming back to the global existence issue, the criterion \eqref{U2} stated in Theorem~\ref{A} entails that non-global solutions blow up in finite time in the Sobolev space $W_2^{4\xi}(D)$ or a finite time touchdown of the elastic plate on the ground plate occurs,
the occurrence of both simultaneously being not excluded \textit{a priori}. 
From a physical point of view, however, only the latter seems possible. For the investigation of the dynamics of MEMS devices it is thus of great importance to rule out mathematically the norm blowup in finite time. In \cite{LW14a} this was done if $D=(-1,1)$ is one-dimensional, that is, in case the elastic part is a beam or a rectangular plate that is homogeneous in one direction. The situation considered herein, where $D$ is an arbitrary two-dimensional (convex) domain, is more delicate. Indeed, the right-hand side of \eqref{u} --~being given by the square of the gradient trace of the electrostatic potential~-- has much less regularity properties due to the fact that the moving boundary problem \eqref{psi0} for the electrostatic potential is posed in a three-dimensional domain $\Omega(u)$. We shall see, however, that we can overcome this difficulty using the gradient flow structure of the evolution problem along with the regularizing effects of the fourth-order operator in (negative) Besov spaces. More precisely, we  shall show the following result.

\begin{theorem}\label{B}
	Under the assumptions of Theorem~\ref{A} let $u$ be the unique maximal solution to \eqref{u} on the maximal interval of existence $[0,T_m)$. Assume that there are $T_0>0$ and $\kappa_0\in (0,1)$ such that 
	\begin{equation}
	u(t)\ge -1 + \kappa_0\ \ \text{in}\ \ D\,, \quad t\in [0,T_m)\cap [0,T_0]\,. \label{td0}
	\end{equation}
	Then $T_m\ge T_0$.
	
	Moreover, if, for each $T>0$, there is $\kappa(T)\in (0,1)$ such that 
	\begin{equation*}
	u(t)\ge -1+\kappa(T)\ \ \text{in}\ \ D\,,\quad t\in [0,T_m)\cap [0,T]\,,
	\end{equation*}
	then $T_m=\infty$.
\end{theorem}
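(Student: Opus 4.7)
It suffices to prove the first assertion; the second then follows by applying the first to $T_0=T$ and letting $T\to\infty$. To prove the first, suppose for contradiction that $T_m<T_0$. Hypothesis \eqref{td0} then precludes the second alternative in the dichotomy \eqref{U2}, so necessarily $\|u(t)\|_{W_2^{4\xi}(D)}\to\infty$ as $t\to T_m$. The plan is to derive a uniform bound on $\|u(t)\|_{W_2^{4\xi}(D)}$ on $[0,T_m)$, yielding the desired contradiction.

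The first ingredient is the gradient flow structure of \eqref{u}-\eqref{psi0}. Introducing the total energy
\[
\mathcal{E}(u) := \frac{\beta}{2}\|\Delta u\|_{L_2(D)}^2 + \frac{\tau}{2}\|\nabla u\|_{L_2(D)}^2 + \frac{a}{4}\|\nabla u\|_{L_2(D)}^4 - \lambda \mathcal{E}_e(u),
\]
with $\mathcal{E}_e(u):=\frac{1}{2}\int_{\Omega(u)}\bigl(\varepsilon^2|\nabla\psi_u|^2+|\partial_z\psi_u|^2\bigr)\,\mathrm{d}(x,z)$, the regularity \eqref{reg} justifies
\[
\mathcal{E}(u(t)) + \int_0^t \|\partial_t u(s)\|_{L_2(D)}^2\,\mathrm{d}s = \mathcal{E}(u^0),\qquad t\in [0,T_m).
\]
Comparing $\psi_u$ with the affine interpolant on the straightened cylinder $D\times(0,1)$ and using \eqref{td0} produces $\mathcal{E}_e(u(t))\leq C(\kappa_0)(1+\|\nabla u(t)\|_{L_2(D)}^2)$. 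Absorbing this into the coercive term $\frac{\beta}{2}\|\Delta u\|_{L_2(D)}^2$ yields a uniform $H^2_0$-bound on $u$ and square integrability of $\partial_t u$ on $[0,T_m)$.

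Next, I control the forcing $g(u)$ via the straightening transformation $(x,z)\mapsto (x,(1+z)/(1+u(t,x)))$, which turns \eqref{psi0} into a uniformly elliptic problem on the fixed cylinder $D\times(0,1)$ with coefficients depending boundedly on $u$, $\nabla u$, and $\Delta u$ (uniform ellipticity being preserved by \eqref{td0}). The $W_2^2$-elliptic theory developed in \cite{LW16a} then yields
\[
\|\psi_{u(t)}\|_{W_2^2(\Omega(u(t)))}^2 \leq C(\kappa_0)\bigl(1+\|u(t)\|_{H^2(D)}^2\bigr),
\]
and the trace theorem applied on the graph $\{z=u(t,x)\}$ provides the uniform estimate $\sup_{t\in [0,T_m)}\|g(u(t))\|_{L_1(D)}\leq M$.

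For the bootstrap, let $\mathbb{A}_1$ denote the realization of $\beta\Delta^2$ with clamped boundary conditions in $L_1(D)$; it generates an analytic semigroup satisfying
\[
\|e^{-t\mathbb{A}_1}\|_{\mathcal{L}(L_1(D),B^{s}_{1,\infty}(D))} \leq C\, t^{-s/4},\qquad s\in [0,4),
\]
together with the Sobolev embedding $B^{s}_{1,\infty}(D)\hookrightarrow W_2^{s-1-\delta}(D)$ in two space dimensions for any $\delta>0$. Writing the variation-of-constants formula
\[
u(t) = e^{-t\mathbb{A}_1}u^0 + \int_0^t e^{-(t-s)\mathbb{A}_1} F(u(s))\,\mathrm{d}s,\quad F(u):=-\lambda g(u)+\bigl(\tau+a\|\nabla u\|_{L_2(D)}^2\bigr)\Delta u,
\]
with $F(u)\in L_\infty(0,T_m;L_1(D))$ by the previous steps, the smoothing estimates yield a uniform $W_2^{4\xi}$-bound. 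The main obstacle is that $g(u)$ has at best $L_1$-regularity, since it is the square of the boundary trace of the gradient of a $W_2^2$-function on a three-dimensional Lipschitz domain; matching this low regularity with the target space $W_2^{4\xi}$ under an integrable-in-time smoothing kernel requires working in the scale of negative Besov spaces and likely iterating the bootstrap in several regularity steps to cover the full admissible range $4\xi\in (7/3,4)$.
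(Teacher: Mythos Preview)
Your overall strategy---energy identity $\Rightarrow$ $H^2$-bound $\Rightarrow$ $L_1$-bound on $g(u)$ $\Rightarrow$ semigroup smoothing from $L_1$-type spaces into $W_2^{4\xi}$---is exactly that of the paper. But the step ``absorbing this into the coercive term $\tfrac{\beta}{2}\|\Delta u\|_{L_2}^2$'' is a genuine gap. From $\mathcal{E}_e(u)\le C(\kappa_0)(1+\|\nabla u\|_{L_2}^2)$ and the energy identity you only get $\tfrac{\beta}{2}\|\Delta u\|_{L_2}^2 \le C + \lambda C(\kappa_0)\|\nabla u\|_{L_2}^2$; since $\|\nabla u\|_{L_2}\le C_P\|\Delta u\|_{L_2}$ for clamped $u$, this closes only when $\lambda C(\kappa_0)C_P^2<\beta/2$, i.e.\ for small $\lambda$ (or when $a>0$, where the quartic term saves you). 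The paper repairs this by first deriving an $L_2(D)$-bound on $u$ via a separate differential inequality: testing \eqref{u1} with $u$ and using $u\ge -1$, $g(u)\ge 0$ gives $\tfrac12\tfrac{d}{dt}\|u\|_{L_2}^2+2\mathcal{E}_m(u)\le \lambda\|g(u)\|_{L_1}$; then $\|g(u)\|_{L_1}\le C(\kappa_0)(1+\|\nabla u\|_{L_2}^2)\le C(\kappa_0)(1+\|u\|_{L_2}\|\Delta u\|_{L_2})$ is absorbed into $\mathcal{E}_m$ by Young, leaving a Gronwall inequality for $\|u\|_{L_2}^2$. With the $L_2$-bound in hand, the same interpolation in the energy identity yields $\mathcal{E}(u)\ge\tfrac12\mathcal{E}_m(u)-C(1+\|u\|_{L_2}^2)$ for \emph{every} $\lambda$.

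Two further differences are worth noting. First, the paper does not obtain the $L_1$-bound on $g(v)$ via $W_2^2$-elliptic regularity for $\psi_v$ plus the trace theorem, but through a Rellich-type identity (multiply \eqref{psiv} by $\partial_z\psi_v-1$ and integrate) combined with the comparison-principle bound $0\le\partial_z\psi_v(\cdot,-1)\le 1/\kappa_0$; this sidesteps any quantitative $W_2^2$-estimate on the Lipschitz cylinder and delivers directly $\|g(v)\|_{L_1}\le C(\kappa_0)(1+\|\nabla v\|_{L_2}^2)$. Second, for the bootstrap the paper does not work in $L_1$ itself but in $B_{1,1}^{\alpha}(D)$ with $\alpha\in(4\xi-3,0)$: the embedding $L_1(D)\hookrightarrow B_{1,1}^{\alpha}(D)$ places the forcing there, the analytic semigroup on $B_{1,1}^\alpha$ (obtained from Guidetti's Besov-space theory after checking the Lopatinskii--Shapiro condition) smooths in a single step to $B_{1,1,\mathcal{B}}^{4\theta+\alpha}\hookrightarrow W_{2,\mathcal{B}}^{4\xi}$ for suitable $\theta<1$, and one may assume $4\xi<3$ without loss of generality---so no iteration is needed.
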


The second statement in Theorem~\ref{B} obviously follows from the first one applied to an arbitrary $T_0>0$. The proof of Theorem~\ref{B} is given in the next section. As mentioned above and similarly to the case $D=(-1,1)$ considered in \cite{LW14a}, it relies on the gradient flow structure of \eqref{u}-\eqref{psi0}, where the corresponding energy is given by
$$
\mathcal{E}(u):=\mathcal{E}_m(u)-\lambda\mathcal{E}_e(u)
$$
with mechanical energy
$$
\mathcal{E}_m(u):=\frac{\beta}{2}\|\Delta u\|_{L_2(D)}^2+\frac{\tau}{2}\|\nabla u\|_{L_2(D)}^2 +\frac{a}{4}\|\nabla u\|_{L_2(D)}^4
$$
and electrostatic energy
$$
\mathcal{E}_e(u):=\int_{\Omega(u)} \left( \varepsilon^2 |\nabla\psi_u(x,z)|^2 + |\partial_z \psi_u(x,z)|^2 \right)\,\rd (x,z)\,.
$$
We shall see that assuming the lower bound \eqref{td0} on the solution $u$ provides a control on the electrostatic energy. Using the gradient flow structure we thus derive a bound on the (a priori unbounded) mechanical energy and, in turn, on the $W_2^2(D)$-norm of $u(t)$ for $t\in [0,T_m)\cap [0,T_0]$. This yields  an $L_1(D)$-bound on the right-hand side of \eqref{u}. We then apply semigroup techniques in negative Besov spaces to obtain a bound on $u(t)$ in the desired Sobolev norm of $W_2^{4\xi}(D)$ for $t\in [0,T_m)\cap [0,T_0]$ which only depends on $T_0$ and $\kappa_0$.

\begin{remark}\label{rem2ndorder}
	It is worth pointing out that the issue whether a norm blowup or touchdown occurs in finite time is still an open problem for the second-order case $\beta=0$ (and $\tau>0$), even in the one-dimensional setting $D=(-1,1)$.
\end{remark}

\section{Proof of Theorem~\ref{B}}
%
\newcounter{NumConst}

Suppose the assumptions of Theorem~\ref{A} and let $u$ denote the unique maximal solution to \eqref{u} on the maximal interval of existence $[0,T_m)$. We want to show that, if  \eqref{td0} is satisfied, then
\begin{equation*}
\| u(t)\|_{W_2^{4\xi}(D)}\le c(T_0,\kappa_0)\,, \quad t\in [0,T_m)\cap [0,T_0]\,,
\end{equation*}
so that Theorem~\ref{A}~(ii) in turn implies Theorem~\ref{B}. To this end we first need to derive suitable estimates on the right-hand side $g(u)$ of \eqref{u} given by the square of the gradient trace of the electrostatic potential $\psi_u$.
 
\subsection{Estimates on the electrostatic potential}

In the following we let $\kappa\in (0,1)$ and set 
$$
\mathcal{S}(\kappa):=\{v\in W_3^2(D)\,:\, v=0 \text{ on }\partial D \text{ and } v\ge -1+\kappa \text{ in } D\}
\,.
$$
We begin with the regularity of the variational solution to \eqref{psi0}, see \cite{LW16a}. 

\begin{prop}\label{P2.1}
Given $v\in \mathcal{S}(\kappa)$, there is a unique solution $\psi_v\in W_2^2(\Omega(v))$   to
\begin{subequations}\label{psiv0}
\begin{align}
\varepsilon^2\Delta\psi +\partial_z^2\psi =0\ ,\quad (x,z)\in \Omega(v)\, ,\label{psiv}\\
\psi(x,z)=\frac{1+z}{1+v(x)}\ ,\quad (x,z)\in  \partial\Omega(v)\,, \label{bcpsiv}
\end{align}
\end{subequations}
in the cylinder
$$
\Omega(v) := \left\{ (x,z)\in D\times (-1,\infty)\ :\ -1 < z < v(x) \right\}\,.
$$
Furthermore, $g(v)\in L_2(D)$.
\end{prop}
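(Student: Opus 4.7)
The plan is to convert the free-boundary problem to a fixed-domain problem via straightening of the upper boundary. Define the diffeomorphism $\eta: \Omega(v) \to \mathcal{C}:= D \times (0,1)$ by
$$
\eta(x,z) := \left( x,\ \frac{1+z}{1+v(x)} \right),
$$
which, since $v \in \mathcal{S}(\kappa)$, is a $W_3^2$-diffeomorphism whose Jacobian is bounded away from zero and infinity by constants depending only on $\kappa$ and $\|v\|_{W_3^2(D)}$. Setting $\phi := \psi_v \circ \eta^{-1}$, the equation \eqref{psiv0} transforms into a linear second-order elliptic equation $\mathcal{L}_v \phi = 0$ on $\mathcal{C}$ whose principal-part coefficients are continuous functions of $v$ and $\nabla v$, the continuity following from the embedding $W_3^1(D) \hookrightarrow C(\bar D)$ valid in space dimension two. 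The boundary condition \eqref{bcpsiv} becomes $\phi(x,\eta_2) = \eta_2$ on $\partial \mathcal{C}$, so that $\Phi(x,\eta_2) := \phi(x,\eta_2) - \eta_2$ satisfies homogeneous Dirichlet data and an equation with right-hand side in $L_2(\mathcal{C})$.

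Existence and uniqueness of $\Phi \in H_0^1(\mathcal{C})$ then follows from the Lax--Milgram theorem after recasting $\mathcal{L}_v$ in divergence form, the uniform ellipticity being a consequence of the lower bound $1+v \ge \kappa$. The key step is to upgrade $\Phi$ from $H^1$ to $H^2$ regularity. I would argue this by a localization procedure: interior $H^2$ estimates come from standard elliptic theory with continuous leading coefficients; near the top and bottom faces one uses a partition of unity that reduces the problem to a flat-boundary situation and applies tangential difference quotients combined with the equation; and near the lateral boundary $\partial D \times (0,1)$ one exploits the convexity of $D$ (inherited by $\mathcal{C}$ in the $x$-variables) together with the smoothness of $\partial D$ to apply Grisvard-type $H^2$ regularity for elliptic problems on convex Lipschitz cylinders. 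Transforming back via $\eta^{-1}$ then yields $\psi_v \in W_2^2(\Omega(v))$, and uniqueness in this class is immediate from uniqueness of the weak solution.

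To conclude that $g(v) \in L_2(D)$, I would observe that $\nabla \psi_v$ and $\partial_z \psi_v$ belong to $H^1(\Omega(v))$, so their traces on the graph $\{z = v(x)\}$ lie in $H^{1/2}$ of that two-dimensional surface. Parametrizing this trace by $x \in D$, which merely multiplies by the bounded factor $\sqrt{1+|\nabla v|^2}$, and invoking the Sobolev embedding $H^{1/2}(D) \hookrightarrow L_4(D)$ valid in dimension two, the squared quantities entering the definition of $g(v)$ in \eqref{u2} lie in $L_2(D)$, as claimed.

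The main obstacle is the $H^2$-regularity step. The cylinder $\mathcal{C}$ has edges along $\partial D \times \{0,1\}$ and the transformed coefficients enjoy only $W_3^1$ regularity in $x$, so smooth-domain elliptic theory cannot be applied directly. The combination of convexity of $\mathcal{C}$ with continuity of the principal coefficients, which is precisely what the two-dimensional embedding $W_3^1(D) \hookrightarrow C(\bar D)$ delivers and which would fail for the weaker regularity $v \in W_2^2(D)$ in dimension two, is what makes the argument work and explains the functional setting $W_3^2$ selected in the definition of $\mathcal{S}(\kappa)$.
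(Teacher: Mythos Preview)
The paper does not actually prove this proposition here; it is recalled from the earlier work \cite{LW16a} (see the sentence immediately preceding the statement). The only part justified explicitly in the present text is the claim $g(v)\in L_2(D)$, for which the paper gives precisely your argument: $\psi_v\in W_2^2(\Omega(v))$ implies that $x\mapsto\nabla\psi_v(x,v(x))$ belongs to $W_2^{1/2}(D)\hookrightarrow L_4(D)$, whence the square lies in $L_2(D)$.

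Your outline---straightening the upper boundary to reduce to the fixed cylinder $D\times(0,1)$, Lax--Milgram for the weak solution, and then an $H^2$-regularity argument exploiting the convexity of $D$ and the continuity of the transformed principal coefficients---is the natural route and is indeed the strategy carried out in \cite{LW16a}. Your identification of the $H^2$-step near the edges $\partial D\times\{0,1\}$ as the genuine difficulty, and of the embedding $W_3^1(D)\hookrightarrow C(\bar D)$ as the reason behind the choice $v\in W_3^2(D)$ in the definition of $\mathcal S(\kappa)$, is exactly right. So your plan is correct and aligned with the cited proof; there is simply nothing further in the present paper to compare it against beyond the trace/embedding remark for $g(v)$.
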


We recall that the $L_2(D)$-integrability of $g(v)$ is a straightforward consequence of  $\psi_v\in W_2^2(\Omega(v))$. Indeed the latter implies that $\big(x\mapsto \nabla \psi_v(x,v(x))\big)\in W_2^{1/2}(D)\hookrightarrow L_4(D)$.\medskip

We next provide pointwise estimates on $\psi_v$.

\begin{lem}\label{L1.1}
Let $v\in \mathcal{S}(\kappa)$. Then, for $(x,z)\in \Omega(v)$, 
$$
0\le \psi_v(x,z)\le \min\left\{1,\frac{1+z}{\kappa}\right\}\,.
$$
\end{lem}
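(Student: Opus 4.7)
The plan is to establish both bounds via the maximum principle for the uniformly elliptic operator $L := \varepsilon^2 \Delta + \partial_z^2$ acting on $W_2^2(\Omega(v))$ functions. Since $v \in \mathcal{S}(\kappa)$, the domain $\Omega(v)$ is bounded and Lipschitz, and $\psi_v$ is a strong solution to $L \psi_v = 0$ in $\Omega(v)$ with continuous boundary data, so the weak maximum principle applies.

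For the lower bound, I would simply observe that the boundary data $\frac{1+z}{1+v(x)}$ is nonnegative on $\partial \Omega(v)$, because $z \ge -1$ on $\Omega(v)$ and $1+v(x) \ge \kappa > 0$ by the definition of $\mathcal{S}(\kappa)$. The maximum principle then gives $\psi_v \ge 0$ throughout $\Omega(v)$.

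For the upper bound $\psi_v \le 1$, note that on $\partial \Omega(v)$ one has $z \le v(x)$, so $\frac{1+z}{1+v(x)} \le 1$; the maximum principle closes the argument. For the bound $\psi_v(x,z) \le (1+z)/\kappa$, the natural barrier is the affine function $\Phi(x,z) := (1+z)/\kappa$, which satisfies $L\Phi = 0$. I would then check, piece by piece, that $\Phi \ge \psi_v$ on $\partial \Omega(v)$:
\begin{itemize}
\item on the bottom $\{z=-1\}$, both $\Phi$ and $\psi_v$ vanish;
\item on the top graph $\{z = v(x)\}$, $\psi_v = 1$ and $\Phi = (1+v(x))/\kappa \ge 1$ since $v(x) \ge -1+\kappa$;
\item on the lateral part $\partial D \times (-1,0)$, $v=0$ by the boundary condition, so $\psi_v = 1+z$ and $\Phi = (1+z)/\kappa \ge 1+z$ because $z \ge -1$ and $\kappa \in (0,1)$.
\end{itemize}
Applying the maximum principle to $\Phi - \psi_v$ then yields $\psi_v \le \Phi$ in $\Omega(v)$, and combining with $\psi_v \le 1$ gives the claimed minimum.

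I do not anticipate a real obstacle here, since the result is essentially the standard comparison principle applied to two carefully chosen barriers ($1$ and $(1+z)/\kappa$). The only point requiring care is that $\Omega(v)$ is merely Lipschitz and $\psi_v$ is a $W_2^2$ strong solution, but this is enough to invoke the maximum principle in the form suitable for strong solutions of uniformly elliptic equations with continuous Dirichlet data.
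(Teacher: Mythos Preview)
Your proposal is correct and follows essentially the same approach as the paper: both argue via the comparison (maximum) principle with the constant barriers $0$ and $1$ and the affine barrier $(1+z)/\kappa$. Your version is merely slightly more explicit in checking the boundary inequality piece by piece.
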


\begin{proof}
Clearly, $(x,z)\mapsto m$ is a solution to \eqref{psiv} for $m=0,1$ and $0\le \psi_v\le 1$ on $\partial \Omega(v)$ since $v=0$ on $\partial D$, hence $0\le \psi_v \le 1$ in $\Omega(v)$ by the comparison principle. Moreover, setting $\Sigma(x,z):=(1+z)/\kappa$ for $(x,z)\in \Omega(v)$, it readily follows that $\Sigma$ is a supersolution to \eqref{psiv0} so that $\psi_v\le \Sigma$ in $\Omega(v)$ again by the comparison principle.
\end{proof}

Lemma~\ref{L1.1} provides uniform estimates on the derivatives of $\psi_v$ on the $v$-independent part of the boundary of $\Omega(v)$.

\begin{cor}\label{C2}
Let $v\in \mathcal{S}(\kappa)$. If $x\in \partial D$ and $z\in (-1,0)$, then $\partial_z\psi_v(x,z)=1$, while if $x\in D$, then
$$
0\le \partial_z\psi_v(x,-1)\le \frac{1}{\kappa}\,,\qquad \partial_z\psi_v(x,v(x))\ge 0\,,
$$
and
$$
\nabla\psi_v(x,-1)=0\,,\qquad \nabla\psi_v(x,v(x))=-\partial_z\psi_v(x,v(x))\nabla v(x)\,.
$$
\end{cor}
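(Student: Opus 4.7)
The corollary comprises five assertions, each of which reduces to reading off the Dirichlet condition \eqref{bcpsiv} on the relevant portion of $\partial\Omega(v)$ and, where needed, combining this with the pointwise bounds from Lemma~\ref{L1.1}. My plan is to handle the three pieces of $\partial\Omega(v)$ -- the lateral face $\partial D\times(-1,0)$, the bottom $D\times\{-1\}$, and the graph $\{z=v(x)\}$ -- in turn. On the lateral face, $v=0$ reduces \eqref{bcpsiv} to $\psi_v(x,z)=1+z$, so differentiation in $z$ yields $\partial_z\psi_v(x,z)=1$.

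On the bottom, \eqref{bcpsiv} gives $\psi_v(x,-1)=0$ for every $x\in D$; since this identity is tangential along $D\times\{-1\}$, differentiation in $x$ gives $\nabla\psi_v(x,-1)=0$. For the upper bound on $\partial_z\psi_v(x,-1)$, I would invoke Lemma~\ref{L1.1} to write
$$
0\le \psi_v(x,-1+h)\le \frac{h}{\kappa}
$$
for $h>0$ small enough that $-1+h<v(x)$, then subtract $\psi_v(x,-1)=0$, divide by $h$, and pass to the limit $h\to 0^+$; both $0\le\partial_z\psi_v(x,-1)$ and $\partial_z\psi_v(x,-1)\le 1/\kappa$ follow. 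On the top face, \eqref{bcpsiv} reads $\psi_v(x,v(x))=1$ for every $x\in D$. Since $\psi_v\le 1$ throughout $\Omega(v)$ by Lemma~\ref{L1.1}, the map $z\mapsto \psi_v(x,z)$ attains its maximum at the right endpoint $z=v(x)$, so the one-sided derivative satisfies $\partial_z\psi_v(x,v(x))\ge 0$. Applying the chain rule to the identity $\psi_v(x,v(x))=1$ finally gives
$$
\nabla\psi_v(x,v(x))+\partial_z\psi_v(x,v(x))\,\nabla v(x)=0,
$$
which is the desired relation.

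The only real subtlety -- and the place I would spend the most care -- is that $\psi_v$ lies only in $W_2^2(\Omega(v))$, so these identities are to be interpreted via traces on the Lipschitz boundary $\partial\Omega(v)$ rather than in a classical pointwise sense. Since $v\in W_3^2(D)\hookrightarrow C^1(\bar D)$ in two dimensions, the upper face is a $C^1$-graph, and the traces of $\psi_v$, $\partial_z\psi_v$, and $\nabla\psi_v$ on each flat or graph portion of $\partial\Omega(v)$ are well-defined in $W_2^{1/2}$; moreover $W_2^2(\Omega(v))\hookrightarrow C(\overline{\Omega(v)})$ legitimises the pointwise use of Lemma~\ref{L1.1} on the boundary. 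With these functional-analytic technicalities in place, the chain rule on the upper face and the difference-quotient argument on the lower face are standard, and the five statements of Corollary~\ref{C2} fall out.
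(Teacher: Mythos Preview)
Your proof is correct and follows essentially the same approach as the paper: both read off the boundary condition \eqref{bcpsiv} on the three portions of $\partial\Omega(v)$, obtain the bounds on $\partial_z\psi_v$ at $z=-1$ and $z=v(x)$ by combining the pointwise estimates of Lemma~\ref{L1.1} with difference quotients, and derive the tangential gradient identities by differentiating $\psi_v(x,-1)=0$ and $\psi_v(x,v(x))=1$ in $x$. Your additional paragraph on the trace-theoretic interpretation is more careful than the paper's brief treatment but does not change the argument.
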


\begin{proof}
The first assertion follows from $\psi_v(x,z)=1+z$, $(x,z)\in \partial D\times (-1,0)$. Next, from \eqref{bcpsiv} and Lemma~\ref{L1.1} we derive, for $(x,z)\in D\times (-1,0)$,
$$
0\le \frac{\psi_v(x,z)-\psi_v(x,-1)}{1+z}\le \frac{1}{\kappa}\,,\qquad \psi_v(x,v(x))-\psi_v(x,z) \ge 0\,,
$$
hence
$$
0\le \partial_z\psi_v(x,-1)\le \frac{1}{\kappa}\,,\qquad \partial_z\psi_v(x,v(x))\ge 0\,.
$$
The formulas for $\nabla\psi_v$ follow immediately from $\psi_v(x,-1)=0$ and $\psi_v(x,v(x))=1$ for $x\in D$ due to \eqref{bcpsiv}.
\end{proof}

Given $v\in \mathcal{S}(\kappa)$ we next introduce the notation
\bqn\label{g}
\gamma(x):=\partial_z\psi_v(x,v(x))\,,\qquad \gamma_b(x):=\partial_z\psi_v(x,-1)
\eqn
for $x\in D$ and recall the following identity, which is proven in \cite[Lemma~5]{ELW13} in the one-dimensional case $D=(-1,1)$,

\begin{lem}\label{L3}
Let $v\in \mathcal{S}(\kappa)$. Then, with the notation \eqref{g},
\begin{equation*}
\int_D \left( 1 + \varepsilon^2 \vert\nabla v\vert^2\right) \left(\gamma^2-2\gamma\right)\, \mathrm{d}x =  \int_D\left( \gamma_b^2-2\gamma_b \right) \,\mathrm{d}x \, . \label{b5}
\end{equation*}
\end{lem}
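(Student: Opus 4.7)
The plan is to derive the identity by multiplying the Laplace-type PDE \eqref{psiv} by the test function $\partial_z\psi_v - 1$ and integrating over $\Omega(v)$. The key motivation is that, by Corollary~\ref{C2}, one has $\partial_z\psi_v \equiv 1$ on the lateral boundary $\partial D\times(-1,0)$; thus $\partial_z\psi_v - 1$ vanishes on the lateral face, which is crucial because we have no direct control over the $x$-normal trace of $\nabla\psi_v$ there.

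First, I rewrite $2(\varepsilon^2\Delta\psi_v + \partial_z^2\psi_v)(\partial_z\psi_v - 1) = 0$ as a sum of a pure $x$-divergence and a pure $z$-derivative. Using the elementary manipulations $2\Delta\psi_v\,\partial_z\psi_v = 2\nabla\cdot(\partial_z\psi_v\nabla\psi_v) - \partial_z|\nabla\psi_v|^2$ and $2\Delta\psi_v = 2\nabla\cdot\nabla\psi_v$, this becomes
\begin{equation*}
2\varepsilon^2\,\nabla\cdot\bigl(\nabla\psi_v\,(\partial_z\psi_v-1)\bigr) + \partial_z\bigl((\partial_z\psi_v)^2 - 2\partial_z\psi_v - \varepsilon^2|\nabla\psi_v|^2\bigr) = 0\,.
\end{equation*}

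Next, I integrate this identity over $\Omega(v)$. The pure $z$-derivative term, by Fubini and the fundamental theorem of calculus, reduces to the trace at $z=v(x)$ minus the trace at $z=-1$; invoking Corollary~\ref{C2} this contributes
\begin{equation*}
\int_D\bigl[\bigl(\gamma^2 - 2\gamma - \varepsilon^2\gamma^2|\nabla v|^2\bigr) - \bigl(\gamma_b^2 - 2\gamma_b\bigr)\bigr]\,\mathrm{d}x\,.
\end{equation*}
For the $x$-divergence term I view it as the $(x,z)$-divergence of the three-dimensional vector field $(\nabla\psi_v(\partial_z\psi_v-1),0)$ and apply the divergence theorem on the Lipschitz domain $\Omega(v)$. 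The contribution on the bottom $D\times\{-1\}$ vanishes because the $x$-component of the outward normal does, the contribution on the lateral $\partial D\times(-1,0)$ vanishes because $\partial_z\psi_v - 1$ does, and the contribution on the top graph $\{z=v(x)\}$, using the outward normal $(-\nabla v,1)/\sqrt{1+|\nabla v|^2}$ together with $\nabla\psi_v(x,v(x))=-\gamma\nabla v$ from Corollary~\ref{C2}, equals $2\varepsilon^2\int_D\gamma(\gamma-1)|\nabla v|^2\,\mathrm{d}x$. Summing the two contributions and using the algebraic identity $2\gamma(\gamma-1)-\gamma^2 = \gamma^2 - 2\gamma$ produces the asserted identity.

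The only technicality is justifying the divergence theorem on $\Omega(v)$ with the $W_2^2$-regularity of $\psi_v$; this is standard on Lipschitz cylinders, since the $L_2$-traces of $\nabla\psi_v$ on each face of $\partial\Omega(v)$ are well-defined and $\gamma|\nabla v|\in L_2(D)$ thanks to the same trace embedding used in Proposition~\ref{P2.1} to guarantee $g(v)\in L_2(D)$.
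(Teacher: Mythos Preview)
Your proof is correct and follows essentially the same approach as the paper: multiply \eqref{psiv} by $\partial_z\psi_v-1$, integrate over $\Omega(v)$, and use Corollary~\ref{C2} to handle the boundary terms. The only difference is cosmetic---you first rewrite the integrand as a pointwise divergence identity before integrating, whereas the paper applies Green's formula directly and simplifies afterwards---but the underlying computation and the crucial observation that $\partial_z\psi_v-1$ vanishes on the lateral boundary are identical.
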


\begin{proof}
We recall the proof for the sake of completeness and point out that it is somewhat related to the Rellich equality \cite[Equation~(5.2)]{Necas67}. We multiply the rescaled Laplace equation~\eqref{psiv} by $\partial_z\psi_v-1$ and integrate over $\Omega(v)$. Denoting the outward unit normal vector field to $\partial D$ and the surface measure on $\partial D$ by $\nu$ and $\sigma$, respectively, we deduce from Green's formula that
\begin{align*}
0 = & \int_{\Omega(v)}\left(\ve^2\Delta\psi_v +\partial_z^2\psi_v\right) \left(\partial_z\psi_v-1\right)\,\rd (x,z)\\
= & \ \ve^2\int_{\partial D} \int_{-1}^0 \left(\partial_z\psi_v-1\right)\nabla\psi_v\cdot \nu\,\rd z\,\rd \sigma\\
&-\ve^2\int_{ D}  \left(\partial_z\psi_v(x,v(x))-1\right) \nabla\psi_v(x,v(x))\cdot \nabla v(x)\,\rd x\\
& - \ve^2\int_{\Omega(v)}  \nabla\psi_v\cdot \partial_z\nabla\psi_v\, \rd (x,z)+\int_D \left(\frac{(\partial_z\psi_v(x,v(x)))^2}{2}-\partial_z\psi_v(x,v(x))\right)\,\rd x\\
& - \int_D \left(\frac{(\partial_z\psi_v(x,-1))^2}{2}-\partial_z\psi_v(x,-1)\right) \,\rd x\,.
\end{align*}
Due to Corollary~\ref{C2} the first integral on the right-hand side vanishes while the others can be simplified to get
\begin{align*}
0 = &  -\ve^2\int_D (\gamma(x)-1)\nabla\psi(x,v(x))\cdot \nabla v(x)\,\rd x-\frac{\ve^2}{2}\int_D\left\vert\nabla\psi_v(x,v(x))\right\vert^2\,\rd x\\
&+\frac{\ve^2}{2}\int_D\left\vert\nabla\psi_v(x,-1)\right\vert^2\,\rd x +\int_D\left(\frac{\gamma^2}{2}-\gamma\right)\, \rd x- \int_D\left(\frac{\gamma_b^2}{2}-\gamma_b\right)\, \rd x\\
=&\ \ve^2\int_D (\gamma-1)\gamma \vert \nabla v\vert^2\,\rd x-\frac{\ve^2}{2}\int_D \gamma^2\vert\nabla v\vert^2\,\rd x+ \int_D\left(\frac{\gamma^2}{2}-\gamma\right)\, \rd x- \int_D\left(\frac{\gamma_b^2}{2}-\gamma_b\right)\, \rd x\\
=&\ \int_D \left(\frac{\gamma^2}{2}-\gamma\right) \left(1+\ve^2\vert \nabla v\vert^2\right)\,\rd x- \int_D\left(\frac{\gamma_b^2}{2}-\gamma_b\right)\, \rd x\,,
\end{align*}
which yields the assertion.
\end{proof}

Given $v\in \mathcal{S}(\kappa)$ we recall that
$$
g(v)(x):= \varepsilon^2 |\nabla\psi_v(x,v(x))|^2 + |\partial_z \psi_v(x,v(x))|^2\,,\quad x\in D\,,
$$
with $\psi_v$ still denoting the solution to \eqref{psiv0}. The next result bounds the $L_1(D)$-norm of $g(v)$ in terms of the $H^1(D)$-norm of $v$.

\begin{cor}\label{C4}
For $v\in \mathcal{S}(\kappa)$,
$$
\| g(v)\|_{L_1(D)}\le \left(4+\frac{2}{\kappa^2}\right) \vert D\vert +4 \ve^2\|\nabla v\|_{L_2(D)}^2\,.
$$
\end{cor}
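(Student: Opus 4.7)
The plan is to combine the pointwise identity for $\nabla\psi_v$ on the upper boundary (Corollary~\ref{C2}) with the integral identity of Lemma~\ref{L3} and then absorb the linear term in $\gamma$ via Young's inequality, using the pointwise bound on $\gamma_b$ to control the remaining boundary contribution.

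First I would rewrite $g(v)$ in a form that matches the weighted quadratic structure appearing in Lemma~\ref{L3}. Using the identity $\nabla\psi_v(x,v(x))=-\gamma(x)\nabla v(x)$ from Corollary~\ref{C2}, a direct computation gives
\begin{equation*}
g(v)(x)=\gamma(x)^2\bigl(1+\varepsilon^2|\nabla v(x)|^2\bigr)\,,\qquad x\in D\,,
\end{equation*}
so that $\|g(v)\|_{L_1(D)}=\int_D \gamma^2(1+\varepsilon^2|\nabla v|^2)\,\rd x$.

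Next I would apply Lemma~\ref{L3} in the form
\begin{equation*}
\int_D \gamma^2\bigl(1+\varepsilon^2|\nabla v|^2\bigr)\,\rd x = \int_D 2\gamma\bigl(1+\varepsilon^2|\nabla v|^2\bigr)\,\rd x + \int_D\bigl(\gamma_b^2-2\gamma_b\bigr)\,\rd x\,,
\end{equation*}
and then handle the middle term by Young's inequality $2\gamma\le \tfrac{1}{2}\gamma^2+2$, which gives
\begin{equation*}
\int_D 2\gamma\bigl(1+\varepsilon^2|\nabla v|^2\bigr)\,\rd x \le \tfrac{1}{2}\|g(v)\|_{L_1(D)} + 2|D| + 2\varepsilon^2\|\nabla v\|_{L_2(D)}^2\,.
\end{equation*}
After absorbing $\tfrac{1}{2}\|g(v)\|_{L_1(D)}$ into the left-hand side, I am left with an estimate of the form
\begin{equation*}
\tfrac{1}{2}\|g(v)\|_{L_1(D)} \le 2|D| + 2\varepsilon^2\|\nabla v\|_{L_2(D)}^2 + \int_D\bigl(\gamma_b^2-2\gamma_b\bigr)\,\rd x\,.
\end{equation*}

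Finally I would use Corollary~\ref{C2}, which gives $0\le \gamma_b\le 1/\kappa$ pointwise on $D$. Since $\gamma_b\ge 0$, we have $\gamma_b^2-2\gamma_b\le \gamma_b^2\le 1/\kappa^2$, so the boundary integral is at most $|D|/\kappa^2$. Multiplying the resulting inequality by $2$ yields exactly the claimed bound. There is no real obstacle here; the only point requiring care is noticing that Lemma~\ref{L3} together with Young's inequality produces precisely the self-absorbing structure needed to control $\|g(v)\|_{L_1(D)}$ by the linear quantities $|D|$, $\kappa^{-2}|D|$, and $\|\nabla v\|_{L_2(D)}^2$, rather than by the a priori uncontrolled $\|g(v)\|_{L_2(D)}$-norm coming from Proposition~\ref{P2.1}.
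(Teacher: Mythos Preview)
Your proof is correct and follows essentially the same route as the paper: rewrite $g(v)=(1+\varepsilon^2|\nabla v|^2)\gamma^2$ via Corollary~\ref{C2}, apply Lemma~\ref{L3}, use Young's inequality $2\gamma\le\tfrac12\gamma^2+2$ to absorb the linear term, and bound $\gamma_b^2-2\gamma_b\le 1/\kappa^2$ from Corollary~\ref{C2}. The steps and constants match the paper's argument.
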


\begin{proof}
Since
$$
g(v)(x)= \ve^2|\nabla\psi_v(x,v(x))|^2+|\partial_z\psi_v(x,v(x))|^2=\left(1+\ve^2\vert\nabla v(x)\vert^2\right) \gamma(x)^2
$$
for $x\in D$ by Corollary~\ref{C2}, we deduce from Corollary~\ref{C2} and Lemma~\ref{L3} that
\begin{equation*}
\begin{split}
\| g(v)\|_{L_1(D)} &= 2\int_D \left(1+\ve^2\vert\nabla v\vert^2\right) \gamma\,\rd x+\int_D (\gamma_b^2-2\gamma_b)\,\rd x\\
&\le \frac{1}{2}\int_D \left(1+\ve^2\vert\nabla v\vert^2\right) \gamma^2\,\rd x +2\int_D \left(1+\ve^2\vert\nabla v\vert^2\right)\,\rd x + \frac{\vert D\vert}{\kappa^2} \\
&\le  \frac{1}{2} \| g(v)\|_{L_1(D)}+ 2\ve^2 \|\nabla v\|_{L_2(D)}^2+ \left(2+\frac{1}{\kappa^2}\right)\vert D\vert \,,
\end{split}
\end{equation*}
from which the assertion follows.
\end{proof}

We next recall the following identity for the electrostatic energy established in \cite[Equation~(3.13)]{LW16b} in the one-dimensional case $D=(-1,1)$. We extend it here to the two-dimensional setting, also providing a simpler proof below.

\begin{lem}\label{L5}
For $v\in \mathcal{S}(\kappa)$,
$$
\mathcal{E}_e(v) =\vert D\vert -\int_D v \left(1+\ve^2\vert\nabla v\vert^2\right) \gamma\,\rd x\,.
$$
\end{lem}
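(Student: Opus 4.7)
The plan is to test the Laplace equation~\eqref{psiv} against a well-chosen multiplier and then apply Green's formula. Observing that the boundary data~\eqref{bcpsiv} reduces to $1+z$ on the lateral face $\partial D \times (-1,0)$ (since $v=0$ on $\partial D$) and that $\psi_v(x,-1)=0$ on the ground plate, I would pick
\[
\phi(x,z) := \psi_v(x,z) - (1+z).
\]
By construction $\phi$ vanishes identically on both $\partial D \times (-1,0)$ and $D \times \{-1\}$, while on the upper surface $\phi(x,v(x)) = 1-(1+v(x)) = -v(x)$.

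Multiplying \eqref{psiv} by $\phi$ and integrating over $\Omega(v)$ gives zero, and Green's identity converts this into a bulk term plus a boundary term. Since $\phi$ vanishes on two of the three faces of $\partial\Omega(v)$, the boundary contribution reduces to the top surface $z=v(x)$, where Corollary~\ref{C2}---in particular $\nabla\psi_v(x,v(x)) = -\gamma(x)\nabla v(x)$---evaluates it as $-\int_D v(1+\varepsilon^2|\nabla v|^2)\gamma\,\mathrm{d}x$. For the volume integral, the identities $\nabla_x\phi = \nabla_x\psi_v$ and $\partial_z\phi = \partial_z\psi_v - 1$ produce $\mathcal{E}_e(v) - \int_{\Omega(v)}\partial_z\psi_v\,\mathrm{d}(x,z)$; then Fubini together with $\psi_v(x,-1)=0$ and $\psi_v(x,v(x))=1$ yields $\int_{\Omega(v)}\partial_z\psi_v\,\mathrm{d}(x,z) = |D|$. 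Rearranging the three pieces produces the claimed identity.

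The main difficulty here is conceptual rather than technical: finding the right test function. The naive choice $\phi=\psi_v$ correctly generates $\mathcal{E}_e(v)$ in the bulk but leaves behind the lateral boundary integral $\varepsilon^2\int_{\partial D}\int_{-1}^0 (1+z)\nabla_x\psi_v\cdot\nu\,\mathrm{d}z\,\mathrm{d}\sigma$, which is intractable because nothing explicit is known about the normal trace of $\nabla_x\psi_v$ on the vertical wall. Subtracting $1+z$ from $\psi_v$ removes precisely this contribution, so a single multiplier carries the entire argument. Green's formula itself is justified on the Lipschitz domain $\Omega(v)$ by the regularity $\psi_v\in W_2^2(\Omega(v))$ from Proposition~\ref{P2.1}.
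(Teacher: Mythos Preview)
Your proof is correct and follows essentially the same route as the paper: both multiply \eqref{psiv} by the same test function $\phi=\psi_v-(1+z)$, apply Green's formula, use the vanishing of $\phi$ on the lateral wall and the bottom plate to eliminate those boundary contributions, and evaluate the top-surface term via Corollary~\ref{C2} and the bulk term via $\int_{\Omega(v)}\partial_z\psi_v\,\mathrm{d}(x,z)=|D|$. The only difference is presentational---the paper writes out all six pieces of Green's formula explicitly before simplifying, whereas you organise the cancellations in advance.
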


\begin{proof}
We multiply the rescaled Laplace equation \eqref{psiv} by $\psi_v(x,z)-1-z$ and integrate over $\Omega(v)$. As in the proof of Lemma~\ref{L3} we use Green's formula to obtain
\begin{align*}
0 = & \int_{\Omega(v)}\left(\ve^2\Delta\psi_v +\partial_z^2\psi_v\right)(x,z) \left(\psi_v(x,z)-1-z\right)\,\rd (x,z)\\
= & \ \ve^2\int_{\partial D} \int_{-1}^0 \left(\psi_v(x,z)-1-z\right)\nabla\psi_v\cdot \nu\,\rd z\,\rd \sigma\\
&-\ve^2\int_{ D}  \left(\psi_v(x,v(x))-1-v(x)\right)\nabla\psi_v(x,v(x))\cdot \nabla v(x)\,\rd x\\
& - \ve^2\int_{\Omega(v)}  \vert\nabla\psi_v\vert^2\, \rd (x,z)+\int_D \left(\psi_v(x,v(x))-1-v(x)\right)\partial_z\psi_v (x,v(x))\,\rd x\\
& - \int_D \psi_v(x,-1)\partial_z\psi_v (x,-1)\,\rd x-\int_{\Omega(v)}\left(\partial_z\psi_v-1\right)\partial_z\psi_v\,\rd (x,z)\,.
\end{align*}
Employing \eqref{bcpsi} we see that the first and the fifth term on the right-hand side vanish while the others can be gathered due to Corollary~\ref{C2} as
\begin{align*}
0 = &-\ve^2\int_{D}  v\vert \nabla v\vert^2 \gamma \,\rd x - \ve^2\int_{\Omega(v)}  \vert\nabla\psi_v\vert^2\, \rd (x,z) -\int_D v \gamma\,\rd x\\
&-\int_{\Omega(v)}\left(\partial_z\psi_v\right)^2\,\rd (x,z) +\int_D\left(\psi_v(x,v(x))-\psi_v(x,-1)\right)\,\rd x\,.
\end{align*}
The last integral being equal to $\vert D\vert$ according to \eqref{bcpsi}, we obtain 
$$
\mathcal{E}_e(v)= \vert D\vert-\int_Dv \left(1+\ve^2\vert\nabla v\vert^2\right)\gamma\,\rd x\,,
$$
hence the assertion.
\end{proof}

We are now in a position to derive a lower bound on the total energy.

\begin{cor}\label{C6}
For $v \in \mathcal{S}(\kappa)$, 
$$
\mathcal{E}(v)\ge \mathcal{E}_m(v)- 3\lambda\ve^2\|\nabla v\|_{L_2(D)}^2- \lambda |D| \left( 4 + \frac{1}{2\kappa^2} \right) \,.
$$
\end{cor}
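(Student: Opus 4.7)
The plan is to combine the energy identity of Lemma~\ref{L5} with the $L_1$-bound on $g(v)$ from Corollary~\ref{C4}, using the sign information on $\gamma$ from Corollary~\ref{C2} and the pointwise lower bound $v \ge -1+\kappa$ that is built into the definition of $\mathcal{S}(\kappa)$. Since $\mathcal{E}(v) = \mathcal{E}_m(v) - \lambda \mathcal{E}_e(v)$, it suffices to produce an upper bound on $\mathcal{E}_e(v)$ of the form $C_1|D| + C_2 \ve^2 \|\nabla v\|_{L_2(D)}^2$ with $C_1 \le 4 + 1/(2\kappa^2)$ and $C_2 \le 3$.

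First I would rewrite $\mathcal{E}_e(v)$ via Lemma~\ref{L5} as
\[
\mathcal{E}_e(v) = |D| - \int_D v \bigl(1+\ve^2|\nabla v|^2\bigr)\gamma\,\rd x.
\]
Because $v \ge -1+\kappa$ in $D$ we have $-v \le 1-\kappa \le 1$ pointwise, while Corollary~\ref{C2} gives $\gamma \ge 0$, so the factor $(1+\ve^2|\nabla v|^2)\gamma$ is nonnegative. Multiplying the latter by the pointwise bound $-v \le 1$ yields
\[
\mathcal{E}_e(v) \le |D| + \int_D \bigl(1+\ve^2|\nabla v|^2\bigr)\gamma\,\rd x.
\]

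Next I would apply Young's inequality in the form $\gamma \le 1 + \gamma^2/4$ (equivalently $(\gamma-2)^2 \ge 0$) to split the remaining integral. Recalling from the proof of Corollary~\ref{C4} that $g(v) = (1+\ve^2|\nabla v|^2)\gamma^2$, this gives
\[
\int_D \bigl(1+\ve^2|\nabla v|^2\bigr)\gamma\,\rd x \le \int_D \bigl(1+\ve^2|\nabla v|^2\bigr)\,\rd x + \tfrac{1}{4}\|g(v)\|_{L_1(D)}.
\]
The first term equals $|D| + \ve^2 \|\nabla v\|_{L_2(D)}^2$, and Corollary~\ref{C4} controls the second by $(1+1/(2\kappa^2))|D| + \ve^2 \|\nabla v\|_{L_2(D)}^2$. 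Adding these gives $\mathcal{E}_e(v) \le (3 + 1/(2\kappa^2))|D| + 2\ve^2 \|\nabla v\|_{L_2(D)}^2$, which after multiplication by $-\lambda$ and combination with $\mathcal{E}_m(v)$ is already slightly stronger than the claimed inequality.

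I do not anticipate any real obstacle here: the work of producing the right $L_1$-bound on the gradient trace has already been done in Corollary~\ref{C4}, and the electrostatic energy identity of Lemma~\ref{L5} is tailor-made for absorbing the $v$-factor using the touchdown-exclusion hypothesis. The only minor point to be careful about is the choice of Young's inequality, which needs to be mild enough to keep the constants in front of both $|D|$ and $\ve^2\|\nabla v\|_{L_2(D)}^2$ under the thresholds stated in the corollary.
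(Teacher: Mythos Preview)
Your proposal is correct and uses the same core ingredients as the paper---Lemma~\ref{L5}, the sign property $\gamma\ge 0$ from Corollary~\ref{C2}, and the $L_1$-bound on $g(v)$ from Corollary~\ref{C4}---so the overall strategy is the paper's. The only difference lies in how the integral $\int_D (1+\ve^2|\nabla v|^2)\gamma\,\rd x$ is estimated: the paper applies the Cauchy--Schwarz inequality to this integral (pairing $(1+\ve^2|\nabla v|^2)^{1/2}$ with $(1+\ve^2|\nabla v|^2)^{1/2}\gamma$) and then uses Young's inequality on the resulting product of square roots, whereas you use the pointwise Young inequality $\gamma\le 1+\gamma^2/4$ directly. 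Your route is a touch more elementary and yields slightly sharper constants (you obtain $(3+1/(2\kappa^2))|D|$ and coefficient $2$ in front of $\ve^2\|\nabla v\|_{L_2(D)}^2$, versus $4+1/(2\kappa^2)$ and $3$), but both arguments are short and the gain is inconsequential for the sequel.
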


\begin{proof}
Since $v\ge -1$ in $D$ and, by Corollary~\ref{C2}, $\gamma\ge 0$ in $D$, we infer from Lemma~\ref{L5} that
\begin{equation*}
\begin{split}
\mathcal{E}(v) &= \mathcal{E}_m(v)-\lambda \mathcal{E}_e(v)=\mathcal{E}_m(v) -\lambda \vert D\vert +\lambda \int_Dv \left(1+\ve^2\vert\nabla v \vert^2\right)\gamma\,\rd x
\\
&\ge \mathcal{E}_m(v) -\lambda \vert D\vert -\lambda \int_D \left(1+\ve^2\vert\nabla v\vert^2\right)\gamma\,\rd x\,,
\end{split}
\end{equation*}
so that the Cauchy-Schwarz inequality and Corollary~\ref{C4} imply that
\begin{equation*}
\begin{split}
\mathcal{E}(v) 
&\ge \mathcal{E}_m(v) -\lambda \vert D\vert -\lambda \left(\int_D \left(1+\ve^2\vert\nabla v\vert^2\right)\,\rd x\right)^{1/2} \|g(v)\|_{L_1(D)}^{1/2}\\
&\ge  \mathcal{E}_m(v) -\lambda \vert D\vert \\
& \quad -\lambda \left(\int_D \left(1+\ve^2\vert\nabla v\vert^2\right)\,\rd x\right)^{1/2} \left( \frac{2 \vert D\vert}{\kappa^2}+ 4 \int_D\left(1+\ve^2\vert\nabla v\vert^2\right)\,\rd x \right)^{1/2}\\
&\ge  \mathcal{E}_m(v) -\lambda \vert D\vert -\frac{\sqrt{2|D|}\lambda}{\kappa} \left(\int_D \left(1+\ve^2\vert\nabla v\vert^2\right)\,\rd x\right)^{1/2} \\
& \qquad -2\lambda \int_D \left(1+\ve^2\vert\nabla v\vert^2\right)\,\rd x\,.
\end{split}
\end{equation*}
The assertion follows then from Young's inequality.
\end{proof}

\subsection{Estimates on the plate deflection}

Under the assumptions of Theorem~\ref{A} let now $u$ be the unique maximal solution to \eqref{u} on the maximal interval of existence $[0,T_m)$.
We may assume that $7/3<4\xi<3$. Let $\kappa_0\in (0,1)$ and $T_0>0$ be such that \eqref{td0} holds true; that is,
\begin{equation}\label{1}
u(t,x)\ge -1+\kappa_0\,,\quad t\in [0,T_m)\cap [0,T_0]\,, \ x\in D\,.
\end{equation}
Throughout this section, $c$ denotes a positive constant which may vary from line to line and depends only on $\beta$, $\tau$, $a$, $\lambda$, $D$, $\varepsilon$, $u^0$, $\kappa_0$, and $T_0$ (in particular, it does not depend on $T_m$).

To prove Theorem~\ref{B} we shall show that 
\begin{equation}\label{1a}
\| u(t)\|_{W_2^{4\xi}(D)}\le c\,, \quad t\in [0,T_m)\cap [0,T_0]\,,
\end{equation}
the assertion then follows from Theorem~\ref{A}~(ii). Note that \eqref{1} just means that $u(t)\in \mathcal{S}(\kappa_0)$ for $t\in [0,T_m)\cap [0,T_0]$ so that the results of the preceding section apply (with $\kappa=\kappa_0$).

\medskip

We first provide an $L_2(D)$-bound on $u$.
 
\begin{lem}\label{L2.1}
There is $c>0$ such that
$$
\| u(t)\|_{L_2(D)}\le c\,,\quad t\in [0,T_m)\cap [0,T_0]\,.
$$
\end{lem}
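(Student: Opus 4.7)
The plan is to multiply the evolution equation \eqref{u1} against $u(t)$ in $L_2(D)$ and integrate over $D$. For $t \in (0, T_m)$, the regularity \eqref{reg} guarantees $u(t) \in W_2^4(D) \cap H_0^2(D)$ and $\partial_t u(t) \in L_2(D)$, so the computation is fully rigorous; two integrations by parts using the clamped boundary conditions \eqref{bcu} then yield
\begin{equation*}
\frac{1}{2}\frac{d}{dt}\|u(t)\|_{L_2(D)}^2 + \beta\|\Delta u(t)\|_{L_2(D)}^2 + \bigl(\tau + a\|\nabla u(t)\|_{L_2(D)}^2\bigr)\|\nabla u(t)\|_{L_2(D)}^2 = -\lambda \int_D g(u(t))\,u(t)\,\mathrm{d}x \, .
\end{equation*}

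To control the right-hand side, I would exploit the nonnegativity of $g(u)$ (a sum of squares by its very definition \eqref{u2}) together with the hypothesis \eqref{1}, which gives $-u(t,x) \le 1 - \kappa_0$ pointwise on $D$ for the relevant times. Combining these two facts yields
\begin{equation*}
-\int_D g(u(t))\,u(t)\,\mathrm{d}x \le (1-\kappa_0)\|g(u(t))\|_{L_1(D)} \, ,
\end{equation*}
and since $u(t) \in \mathcal{S}(\kappa_0)$, Corollary~\ref{C4} (applied with $\kappa = \kappa_0$) majorizes $\|g(u(t))\|_{L_1(D)}$ by an expression of the form $c + c\|\nabla u(t)\|_{L_2(D)}^2$ with $c$ depending only on the allowed parameters.

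It then remains to absorb the gradient term. Under clamped boundary conditions, integration by parts together with the Cauchy--Schwarz inequality yields the interpolation estimate $\|\nabla v\|_{L_2(D)}^2 \le \|v\|_{L_2(D)}\|\Delta v\|_{L_2(D)}$ valid for every $v \in H_0^2(D)$, so applying Young's inequality with a sufficiently small parameter transfers a fraction of $\beta\|\Delta u\|_{L_2(D)}^2$ to the right-hand side and leaves a term of the form $c\|u(t)\|_{L_2(D)}^2$. Discarding the remaining nonnegative dissipative contributions on the left then reduces the estimate to a differential inequality of the form
\begin{equation*}
\frac{d}{dt}\|u(t)\|_{L_2(D)}^2 \le c\bigl(1 + \|u(t)\|_{L_2(D)}^2\bigr)\,, \qquad t \in (0, T_m) \cap (0, T_0]\,.
\end{equation*}
Gronwall's lemma combined with continuity of $t \mapsto u(t)$ from $[0, T_m)$ into $W_2^{4\xi}(D) \hookrightarrow L_2(D)$ (ensured by \eqref{reg}) then delivers the claimed uniform $L_2$-bound on $[0, T_m) \cap [0, T_0]$.

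I do not foresee a serious obstacle here: this is essentially the standard $L_2$ test-function computation, and the lower bound \eqref{1} is precisely what converts the sign-indefinite electrostatic right-hand side into a controllable nonnegative quantity. The only points requiring care are the justification of the time differentiation down to $t = 0$ (handled by first integrating over $[s, t] \subset (0, T_m)$ and letting $s \to 0$) and the bookkeeping of the constant $c$, which must depend only on $\beta, \tau, a, \lambda, D, \varepsilon, u^0, \kappa_0, T_0$; the latter is controlled by tracking the explicit $\kappa_0$-dependence in Corollary~\ref{C4}.
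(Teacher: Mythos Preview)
Your proposal is correct and follows essentially the same route as the paper: multiply \eqref{u1} by $u(t)$, bound the right-hand side via $g(u)\ge 0$ and the lower bound on $u$ together with Corollary~\ref{C4}, then absorb $\|\nabla u\|_{L_2}^2$ using the interpolation inequality $\|\nabla u\|_{L_2}^2 \le \|u\|_{L_2}\|\Delta u\|_{L_2}$ and Young's inequality, and conclude by Gronwall. The paper uses the slightly cruder bound $-u\le 1$ in place of your $-u\le 1-\kappa_0$, and packages the dissipative terms as $2\mathcal{E}_m(u)$, but these are cosmetic differences.
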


\begin{proof}
Let $t\in [0,T_m)$. It readily follows from \eqref{u} and the lower bounds $u(t)\ge -1$ and $g(u(t))\ge 0$ in $D$ that
$$
\frac{1}{2}\frac{\rd}{\rd t}\| u(t)\|_{L_2(D)}^2+2\mathcal{E}_m(u(t))=-\lambda\int_D u(t) g(u(t))\,\rd x\le \lambda \|g(u(t))\|_{L_1(D)}\,.
$$
Now, Corollary~\ref{C4} along with  interpolation and Young's inequality imply, for $t\in [0,T_m)\cap [0,T_0]$,
\begin{align*}
\| g(u(t))\|_{L_1(D)}&\le c\left (1 + \|\nabla u(t)\|_{L_2(D)}^2 \right)\\
& \le c\left (1 + \|u(t)\|_{L_2(D)}\, \|\Delta u(t)\|_{L_2(D)}\right)\\
&\le \frac{1}{\lambda} \mathcal{E}_m(u(t)) + c \left(1+\|u(t)\|_{L_2(D)}^2\right) \,.
\end{align*}
Combining the two inequalities yields
$$
\frac{1}{2}\frac{\rd}{\rd t}\| u(t)\|_{L_2(D)}^2+\mathcal{E}_m(u(t))\le c \left(1+\|u(t)\|_{L_2(D)}^2\right)\,,\quad t\in [0,T_m)\cap [0,T_0]\,,
$$
from which the assertion follows.
\end{proof}

We next show that the lower bound \eqref{1} on $u$ implies that the mechanical energy is dominated by the total energy.

\begin{lem}\label{L2.2}
There is $c>0$ such that
$$
\mathcal{E}(u(t))\ge \frac{1}{2}\mathcal{E}_m(u(t))-c\,,\quad t\in [0,T_m)\cap [0,T_0]\,.
$$
\end{lem}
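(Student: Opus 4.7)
The plan is to combine Corollary~\ref{C6} with the uniform $L_2$-bound provided by Lemma~\ref{L2.1}, using a standard interpolation to absorb the gradient term into the bi-Laplacian part of the mechanical energy. Since $u(t)\in \mathcal{S}(\kappa_0)$ for $t\in [0,T_m)\cap[0,T_0]$ by \eqref{1}, Corollary~\ref{C6} (with $\kappa=\kappa_0$) immediately yields
$$
\mathcal{E}(u(t))\ge \mathcal{E}_m(u(t)) - 3\lambda\varepsilon^2 \|\nabla u(t)\|_{L_2(D)}^2 - \lambda |D|\left(4 + \frac{1}{2\kappa_0^2}\right),
$$
so the only task is to dominate $3\lambda\varepsilon^2\|\nabla u(t)\|_{L_2(D)}^2$ by $\tfrac{1}{2}\mathcal{E}_m(u(t))$ up to a constant.

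To this end, I would use that $u(t)$ vanishes on $\partial D$ and integrate by parts to obtain
$$
\|\nabla u(t)\|_{L_2(D)}^2 = -\int_D u(t)\,\Delta u(t)\,\rd x \le \|u(t)\|_{L_2(D)}\,\|\Delta u(t)\|_{L_2(D)}.
$$
An application of Young's inequality then gives
$$
3\lambda\varepsilon^2 \|\nabla u(t)\|_{L_2(D)}^2 \le \frac{\beta}{4}\|\Delta u(t)\|_{L_2(D)}^2 + c_1\,\|u(t)\|_{L_2(D)}^2
$$
for some constant $c_1$ depending on $\beta$, $\lambda$, $\varepsilon$. Because $\mathcal{E}_m(u(t))\ge \tfrac{\beta}{2}\|\Delta u(t)\|_{L_2(D)}^2$, the first term on the right is bounded by $\tfrac{1}{2}\mathcal{E}_m(u(t))$.

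Finally, Lemma~\ref{L2.1} furnishes a uniform bound on $\|u(t)\|_{L_2(D)}$ on $[0,T_m)\cap[0,T_0]$, which turns the $c_1\|u(t)\|_{L_2(D)}^2$ term and the $\kappa_0$-dependent constant from Corollary~\ref{C6} into an absolute constant $c$, yielding the claim. There is no real obstacle here: once Corollary~\ref{C6} and Lemma~\ref{L2.1} are available, the estimate is a direct consequence of an elementary interpolation and Young's inequality.
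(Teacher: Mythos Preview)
Your proposal is correct and follows essentially the same approach as the paper: apply Corollary~\ref{C6}, control $\|\nabla u(t)\|_{L_2(D)}^2$ via the interpolation $\|\nabla u\|_{L_2}^2\le \|u\|_{L_2}\|\Delta u\|_{L_2}$ and Young's inequality to absorb it into $\tfrac12\mathcal{E}_m(u(t))$, and then invoke Lemma~\ref{L2.1}. The paper merely phrases the intermediate step as $\mathcal{E}(u(t))\ge \mathcal{E}_m(u(t))-c\|u(t)\|_{L_2}\mathcal{E}_m(u(t))^{1/2}-c$, which is exactly your interpolation written in slightly more compact form.
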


\begin{proof}
We infer from Corollary~\ref{C6} along with interpolation and Young's inequality that, for some constant $c>0$,
\begin{equation*}
\begin{split}
\mathcal{E}(u(t))&\ge \mathcal{E}_m(u(t))- c\|u(t)\|_{L_2(D)} \mathcal{E}_m(u(t))^{1/2}-c\\
&\ge \frac{1}{2} \mathcal{E}_m(u(t)) -c \left(1+\|u(t)\|_{L_2(D)}^2\right)
\end{split}
\end{equation*}
for $t\in [0,T_m)\cap [0,T_0]$. Lemma~\ref{L2.1} yields the claim.
\end{proof}

We next exploit the gradient flow structure of the evolution problem to obtain additional estimates.

\begin{cor}\label{C2.3}
There is $c>0$ such that
$$
\| u(t)\|_{H^2(D)}+\int_0^t \|\partial_t u(s)\|_{L_2(D)}^2\,\rd s\le c\,,\quad t\in [0,T_m)\cap [0,T_0]\,.
$$
\end{cor}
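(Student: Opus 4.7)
The plan is to exploit the gradient flow structure of \eqref{u}-\eqref{psi0} for the energy $\mathcal{E}$. Concretely, I would first establish the dissipation identity
\begin{equation*}
\frac{\rd}{\rd t}\mathcal{E}(u(t)) = - \|\partial_t u(t)\|_{L_2(D)}^2\,, \quad t\in [0,T_m)\,.
\end{equation*}
For $\mathcal{E}_m$ this follows from multiplying \eqref{u1} by $\partial_t u$, integrating over $D$, and using the clamped boundary conditions \eqref{bcu} (which also force $\partial_t u = 0$ on $\partial D$): the $\Delta^2 u$ and $\Delta u$ terms respectively contribute the time derivatives of $\frac{\beta}{2}\|\Delta u\|_{L_2}^2$ and of $\frac{\tau}{2}\|\nabla u\|_{L_2}^2 + \frac{a}{4}\|\nabla u\|_{L_2}^4$. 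The crucial identity
\begin{equation*}
\int_D g(u(t))\,\partial_t u(t)\,\rd x = \frac{\rd}{\rd t}\mathcal{E}_e(u(t))
\end{equation*}
reflects that $g(u)$ is the $L_2$-gradient of $-\mathcal{E}_e$; it is obtained by differentiating in time the integral over the moving domain $\Omega(u(t))$ and using that $\psi_{u(t)}$ solves \eqref{psiv0}, and is justified by the regularity~\eqref{reg} of $u$ and $\psi_u$ on $(0,T_m)$ (this computation is carried out in the companion papers \cite{LW16a,LW16b}).

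Integrating the dissipation identity from $0$ to $t$ yields
\begin{equation*}
\mathcal{E}(u(t)) + \int_0^t \|\partial_t u(s)\|_{L_2(D)}^2\,\rd s = \mathcal{E}(u^0)\,,\quad t\in [0,T_m)\,,
\end{equation*}
which is the backbone of the estimate. Combining this with Lemma~\ref{L2.2}, which gives $\mathcal{E}_m(u(t)) \le 2\mathcal{E}(u(t)) + 2c$ on $[0,T_m)\cap[0,T_0]$, I obtain
\begin{equation*}
\mathcal{E}_m(u(t)) \le 2\mathcal{E}(u^0) + 2c\,, \quad t\in [0,T_m)\cap[0,T_0]\,.
\end{equation*}
In particular $\|\Delta u(t)\|_{L_2(D)}$ is bounded, and combining with the $L_2$-bound from Lemma~\ref{L2.1} and elliptic regularity for the Dirichlet Laplacian on $D$ (using $u(t)=0$ on $\partial D$) yields the desired control $\|u(t)\|_{H^2(D)} \le c$.

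For the time integral, I would use $\mathcal{E}_m \ge 0$ together with Lemma~\ref{L2.2} and Lemma~\ref{L2.1} to get a lower bound $\mathcal{E}(u(t)) \ge -c$ on $[0,T_m)\cap[0,T_0]$, so that
\begin{equation*}
\int_0^t \|\partial_t u(s)\|_{L_2(D)}^2\,\rd s = \mathcal{E}(u^0)-\mathcal{E}(u(t)) \le \mathcal{E}(u^0)+c\,, \quad t\in [0,T_m)\cap[0,T_0]\,.
\end{equation*}
The only genuinely delicate point is the justification of the identity $\int_D g(u)\partial_t u\,\rd x = \tfrac{\rd}{\rd t}\mathcal{E}_e(u)$, because it requires differentiating the electrostatic energy along the flow over the time-dependent domain $\Omega(u(t))$; once this is in hand, everything else reduces to the already established Lemmas~\ref{L2.1} and \ref{L2.2}.
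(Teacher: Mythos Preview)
Your approach is essentially the same as the paper's: invoke the energy dissipation inequality
\[
\mathcal{E}(u(t)) + \int_0^t \|\partial_t u(s)\|_{L_2(D)}^2\,\rd s \le \mathcal{E}(u^0)\,,
\]
combine it with Lemma~\ref{L2.2}, and read off both the $H^2$ bound (via $\mathcal{E}_m$) and the dissipation bound. One small slip: your displayed identity should read $\int_D g(u)\,\partial_t u\,\rd x = -\tfrac{\rd}{\rd t}\mathcal{E}_e(u)$ (consistent with your correct verbal statement that $g(u)$ is the gradient of $-\mathcal{E}_e$); with the sign as you wrote it the dissipation identity for $\mathcal{E}=\mathcal{E}_m-\lambda\mathcal{E}_e$ would not follow. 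Note also that the paper only invokes the energy \emph{inequality} (citing \cite{LW14a,LW16b}), which is all that is needed and sidesteps the full justification of the time derivative of $\mathcal{E}_e$.
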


\begin{proof}
Analogously to \cite[Proposition~1.3]{LW14a} (see also \cite{LW16b}) the energy inequality
$$
\mathcal{E}(u(t))+\int_0^t\|\partial_t u(s)\|_{L_2(D)}^2\,\rd s \le \mathcal{E}(u^0)\,,\quad t\in [0,T_m)\,,
$$
holds; that is, due to Lemma~\ref{L2.2},
$$
\mathcal{E}(u^0)\ge \frac{1}{2}\mathcal{E}_m(u(t))-c+\int_0^t\|\partial_t u(s)\|_{L_2(D)}^2\,\rd s\,,\quad t\in [0,T_m)\cap [0,T_0]\,.
$$
The claim follows then from the fact that $\mathcal{E}(u^0)<\infty$ and the definition of $\mathcal{E}_m$.
\end{proof}

Combining now Corollary~\ref{C4} and Corollary~\ref{C2.3} we readily obtain an $L_1(D)$-bound on the right-hand side of \eqref{u}.

\begin{cor}\label{C2.4}
There is $c>0$ such that
$$
\|g(u(t))\|_{L_1(D)}\le c\,,\quad t\in [0,T_m)\cap [0,T_0]\,.
$$
\end{cor}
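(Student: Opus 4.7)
The plan is to apply Corollary~\ref{C4} pointwise in time with $v = u(t)$ and $\kappa = \kappa_0$. Since the hypothesis \eqref{1} says precisely that $u(t) \in \mathcal{S}(\kappa_0)$ for every $t\in [0,T_m)\cap[0,T_0]$, Corollary~\ref{C4} yields
\begin{equation*}
\| g(u(t))\|_{L_1(D)} \le \left(4 + \frac{2}{\kappa_0^2}\right) |D| + 4\varepsilon^2 \|\nabla u(t)\|_{L_2(D)}^2
\end{equation*}
for all such $t$. The first term is already an admissible constant since it depends only on $\kappa_0$ and $D$.

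It remains to control $\|\nabla u(t)\|_{L_2(D)}$ by a constant of the type allowed. This is exactly where Corollary~\ref{C2.3} enters: it provides a uniform bound $\| u(t)\|_{H^2(D)} \le c$ on $[0,T_m)\cap[0,T_0]$, which in particular bounds $\|\nabla u(t)\|_{L_2(D)}$ by a constant depending only on $\beta$, $\tau$, $a$, $\lambda$, $D$, $\varepsilon$, $u^0$, $\kappa_0$, and $T_0$.

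Inserting this bound into the previous inequality finishes the proof. There is no serious obstacle here; the corollary is a direct combination of the electrostatic bound in Corollary~\ref{C4} with the mechanical-energy estimate in Corollary~\ref{C2.3}, and the role of the lower bound \eqref{td0} is simply to legitimize the application of Corollary~\ref{C4} with the fixed constant $\kappa = \kappa_0$.
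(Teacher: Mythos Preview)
Your proposal is correct and matches the paper's own argument essentially verbatim: the paper simply states that the result follows by combining Corollary~\ref{C4} (applied with $v=u(t)$ and $\kappa=\kappa_0$, which is legitimate since $u(t)\in\mathcal{S}(\kappa_0)$) with the $H^2(D)$-bound from Corollary~\ref{C2.3}.
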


\subsection{Proof of Theorem~\ref{B}}

It remains to prove that the $L_1(D)$-bound from Corollary~\ref{C2.4} implies a bound on $u$ in the Sobolev space $W_2^{4\xi}(D)$, that is, inequality \eqref{1a}.

For this purpose we introduce $B_{1,1,\mathcal{B}}^{s}(D)$ for $s\in \R\setminus\{1,2\}$, i.e.  the Besov space $B_{1,1}^{s}(D)$ incorporating the boundary conditions appearing in \eqref{bcu} (if meaningful):
$$
B_{1,1,\mathcal{B}}^{s}(D):=\left\{\begin{array}{ll}
\{w\in  B_{1,1}^{s}(D)\ :\  w=\partial_\nu w=0 \text{  on } \partial D\}\ , & s > 2\,,\\
& \\
\{w\in  B_{1,1}^{s}(D)\ :\ w=0 \text{  on } \partial D\}\ , &s \in (1,2)\,,\\
&\\
B_{1,1}^{s}(D)\ , &s< 1\,.\end{array}\right.
$$
The spaces $W_{2,\mathcal{B}}^{s}(D)$ are defined analogously with $B_{1,1}^{s}$ replaced by $W_{2}^{s}$, but for $s>3/2$, $s\in (1/2,3/2)$, and $s<1/2$, respectively. 

From now on, we fix $\alpha\in (4\xi-3,0)$.  Hereafter, the constant $c$ may also depend on $\xi$ and $\alpha$ (but still not on $T_m$). The dependence upon additional parameters is indicated explicitly.

\begin{lem}\label{SG}
The operator $-A$, given by
$$
-A v:= (-\beta \Delta^2 +\tau\Delta) v \,,\quad v\in B_{1,1,\mathcal{B}}^{4+\alpha}(D)\,,
$$
generates an analytic semigroup $\{e^{-tA}\,;\, t\ge 0\}$ on $B_{1,1}^{\alpha}(D)$ and, when restricted to $W_{2,\mathcal{B}}^{4}(D)$, on $L_2(D)$. Given $\theta\in (0,1)$ with $\theta\not\in\{(1-\alpha)/4,(2-\alpha)/4\}$, there are $c>0$ and $c(\theta)>0$ such that, for $t\in [0,T_0]$,
\bqn\label{6}
\|e^{-tA}\|_{\mathcal{L}(W_{2,\mathcal{B}}^{4\xi}(D))} \le c \qquad \ \text{ and}\qquad  \ t^\theta \|e^{-tA}\|_{\mathcal{L}(B_{1,1}^{\alpha}(D),B_{1,1,\mathcal{B}}^{4\theta+\alpha}(D))} \le c(\theta)\,.
\eqn
\end{lem}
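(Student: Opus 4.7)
My plan is to prove Lemma~\ref{SG} in three stages: first establish semigroup generation on the standard $L_p$ scale, then extrapolate to the negative-order Besov space $B_{1,1}^{\alpha}(D)$, and finally derive the smoothing estimates via an identification of fractional power domains with Besov spaces.

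For the first stage, since $-A$ is a fourth-order elliptic operator with smooth coefficients on the smooth bounded domain $D$, supplemented with clamped boundary conditions that satisfy the Lopatinskii--Shapiro condition, the classical $L_p$-theory for higher-order elliptic boundary value problems (Agmon--Douglis--Nirenberg, or Denk--Hieber--Pr\"uss) yields that $-A$ generates a strongly continuous analytic semigroup on $L_p(D)$ for every $p\in(1,\infty)$, with domain consisting of functions in $W_p^4(D)$ vanishing together with their normal derivative on $\partial D$. In particular, on $L_2(D)$ the operator $A$ is self-adjoint and non-negative, and its restriction to $W_{2,\mathcal{B}}^4(D)$ provides the second generation statement. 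To obtain generation on $B_{1,1}^{\alpha}(D)$ with $\alpha\in(4\xi-3,0)$, I would invoke the interpolation--extrapolation scale machinery of Amann (cf.\ his monograph \emph{Linear and Quasilinear Parabolic Problems}): starting from the generator on $L_q(D)$ for some $q\in(1,\infty)$ close to $1$, this produces a scale of Banach spaces together with compatible generators of analytic semigroups on each level, whose negative-order members are identified, via real interpolation and duality, with Besov spaces of type $B_{1,1}^{s}$. Because $\alpha<0$, no boundary conditions enter the definition of $B_{1,1,\mathcal{B}}^{\alpha}(D)=B_{1,1}^{\alpha}(D)$, while the domain $B_{1,1,\mathcal{B}}^{4+\alpha}(D)$ does carry them since $4+\alpha>3$, which matches the scale produced by the extrapolation.

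Finally, the second smoothing estimate in~\eqref{6} follows from the standard analytic semigroup bound $\|A^{\theta}e^{-tA}\|_{\mathcal{L}(B_{1,1}^{\alpha}(D))}\le c(\theta)\,t^{-\theta}$ combined with an identification $[B_{1,1}^{\alpha}(D),B_{1,1,\mathcal{B}}^{4+\alpha}(D)]_{\theta}\doteq B_{1,1,\mathcal{B}}^{4\theta+\alpha}(D)$ via complex interpolation in the Besov scale. The two excluded values $\theta\in\{(1-\alpha)/4,(2-\alpha)/4\}$ correspond exactly to $4\theta+\alpha\in\{1,2\}$, the indices at which the Dirichlet and Neumann traces cease to be well-defined on $B_{1,1}^{4\theta+\alpha}(D)$ and the interpolation identification breaks down. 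The first inequality in~\eqref{6} is then the uniform boundedness of the semigroup on $W_{2,\mathcal{B}}^{4\xi}(D)$, a consequence of generation on $L_2(D)$ and interpolation between $L_2(D)$ and $W_{2,\mathcal{B}}^4(D)$. The main obstacle will be the third stage: identifying the fractional-power domains with the concrete scale $B_{1,1,\mathcal{B}}^{s}(D)$ defined in the paper requires careful trace theory in Besov spaces and precisely forces the two exceptional values of $\theta$ to be excluded.
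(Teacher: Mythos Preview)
Your three-stage plan is reasonable in outline, but it takes a different and more circuitous route than the paper, and two of the steps are looser than you suggest. The paper does not go through the $L_p$-scale and Amann's interpolation--extrapolation machinery at all: it verifies the Lopatinskii--Shapiro condition for $(-\beta\Delta^2,\mathrm{tr},\partial_\nu)$ explicitly (writing down the ODE on the half-line, computing its roots $M_\pm=\sqrt{|\zeta|^2\pm\sqrt{r}\,e^{i(\vartheta+\pi)/2}}$, and checking that bounded solutions must vanish), and then invokes Guidetti's generation theorem for elliptic systems in Besov spaces \emph{directly} on $B_{1,1}^{\alpha}(D)$. This sidesteps the issue in your stage two: Amann's basic extrapolation scale built over $L_q(D)$ with $q>1$ produces $H_q^s$ or $B_{q,q}^s$ spaces, not $B_{1,1}^s$, so reaching the endpoint secondary index $1$ via that route would require additional work you have not indicated.

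The second point of divergence is your stage three. You propose to combine the fractional-power estimate $\|A^{\theta}e^{-tA}\|\le c(\theta)t^{-\theta}$ with a \emph{complex} interpolation identification $[B_{1,1}^{\alpha},B_{1,1,\mathcal{B}}^{4+\alpha}]_\theta\doteq B_{1,1,\mathcal{B}}^{4\theta+\alpha}$. For this to give the claimed smoothing you would also need $D(A^\theta)\doteq [X,D(A)]_\theta$, which is not automatic for analytic semigroups and typically requires bounded imaginary powers. The paper instead uses the \emph{real} interpolation identity $(B_{1,1}^{\alpha},B_{1,1,\mathcal{B}}^{4+\alpha})_{\theta,1}\doteq B_{1,1,\mathcal{B}}^{4\theta+\alpha}$ from Guidetti together with the standard analytic-semigroup smoothing into $(X,D(A))_{\theta,1}$ (Amann, II.Lemma~5.1.3), which needs no extra hypothesis and is exactly the setting in which the two exceptional values $4\theta+\alpha\in\{1,2\}$ appear. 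Your approach can likely be made to work, but it adds two nontrivial obligations (the $B_{1,1}$ extrapolation and BIP) that the paper's direct Besov-space route avoids entirely.
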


\begin{proof}
It is readily seen that the principal part $-\beta\Delta^2$ of the operator $-A$ with symbol $-A_0(i\zeta)=-\beta\vert \zeta\vert^4$ is elliptic and, when supplemented with the normal system  $B=(\mathrm{tr},\partial_\nu)$ of boundary operators, satisfies the Lopatinskii-Shapiro condition~(o) from \cite[p.~268]{Gu_MN}. Indeed, given $x\in\partial D$, $\zeta\in\R^2$, $r\ge 0$, and $\vartheta\in[-\pi/2,\pi/2]$ with $\zeta\cdot\nu(x)=0$ and $(\zeta,r)\not=(0,0)$, this condition requires that zero is the only bounded solution on $[0,\infty)$ to 
$$
\left(-A_0\big(i\zeta+\nu(x)\partial_t\big)-re^{i\vartheta}\right)v=0\,,\qquad B\big(i\zeta+\nu(x)\partial_t\big)v(0)=0\,,
$$ 
that is, to 
\begin{subequations}\label{r}
\begin{align}
\partial_t^4 v(t) -2\vert\zeta\vert^2\partial_t^2 v(t) +\left(\vert\zeta\vert^4+re^{i\vartheta}\right) v(t)&=0\,,\quad t>0\,,\,\\
v(0)= \partial_t v(0)&=0\,.
\end{align}
\end{subequations}
Introducing 
$$
M_\pm:=\sqrt{\vert\zeta\vert^2\pm\sqrt{r} e^{i(\vartheta+\pi)/2}}\,,\qquad \mathrm{Re}\, M_\pm>0\,,
$$
the solution to \eqref{r} is
\begin{equation*}
\begin{split}
v(t)=&\left(-\frac{M_-+M_+}{2 M_-}k_1-\frac{M_--M_+}{2 M_-}k_2\right)e^{-M_-t}+ k_1e^{-M_+t}\\
&+\left(-\frac{M_--M_+}{2 M_-}k_1-\frac{M_-+M_+}{2 M_-}k_2\right)e^{M_-t} +k_2e^{M_+t}
\end{split}
\end{equation*}
for $t\ge 0$ with $k_j\in \R$. Since $v$ must be bounded, $k_1=k_2=0$ and thus $v\equiv 0$. Consequently,
assumptions (m), (n), and (o) from \cite[Theorem~2.18]{Gu_MN} are satisfied and it follows that the operator $-A$
generates an analytic semigroup $\{e^{-tA}\,;\, t\ge 0\}$ on $B_{1,1}^{\alpha}(D)$ (recall that $\alpha\in (4\xi-3,0)\subset (-2,1)$). Similarly, \cite[Remarks~4.2~(b)]{Amann_Teubner} ensures that $-A$ restricted to $W_{2,\mathcal{B}}^{4}(D)$
generates an analytic semigroup $\{e^{-tA}\,;\, t\ge 0\}$ on $L_2(D)$. Notice then that \cite[Proposition~4.13]{Gu_MZ} implies that
$$
\big( B_{1,1}^{\alpha}(D),B_{1,1,\mathcal{B}}^{4+\alpha}(D)\big)_{\theta,1}\doteq B_{1,1,\mathcal{B}}^{4\theta+\alpha}(D)\,,\quad 4\theta\in (0,4)\setminus\{1-\alpha,2-\alpha\}\,,
$$
with $( \cdot,\cdot)_{\theta,1}$ denoting the real interpolation functor. Thus, standard regularizing effects of analytic semigroups \cite[II.Lemma~5.1.3]{LQPP} imply \eqref{6}.
\end{proof}

\begin{proof}[Proof of Theorem~\ref{B}]
To finish off the proof of Theorem~\ref{B} we first recall the continuity of the following embeddings
\bqn\label{5}
B_{1,1,\mathcal{B}}^{4+\alpha}(D)\hookrightarrow B_{1,1,\mathcal{B}}^{s}(D)\hookrightarrow B_{1,1,\mathcal{B}}^{0}(D)\hookrightarrow L_1(D)\hookrightarrow B_{1,1}^{\alpha}(D)\,,\quad s\in (0,4+\alpha)\,,
\eqn
bearing in mind that $\alpha<0$. Now, introducing
$$
h(t):=-\lambda g (u(t))+ a \|\nabla u(t)\|_{L_2(D)}^2\, \Delta u(t)\,,\quad t\in [0,T_m)\,,
$$
we deduce from \eqref{5}, Corollary~\ref{C2.3}, and Corollary~\ref{C2.4} that
\begin{equation}\label{7}
\|h(t)\|_{B_{1,1}^{\alpha}(D)}\le c \,\|h(t)\|_{L_1(D)} \le c\,,\quad t\in [0,T_m)\cap [0,T_0]\,.
\end{equation}
Since $\alpha\in (4\xi-3,0)$ we can fix $\theta\in (0,1)$ and $4\xi_1\in (4\xi,4)\setminus\{3\}$ such that 
\begin{equation*}
4\theta+\alpha>4\xi_1+1>4\xi+1
\end{equation*} 
and, consequently, (see \cite[Section~5]{Amann_Teubner} for instance),
\bqn\label{8}
B_{1,1,\mathcal{B}}^{4\theta+\alpha}(D)\hookrightarrow B_{2,2,\mathcal{B}}^{4\xi_1}(D) \doteq W_{2,\mathcal{B}}^{4\xi_1}(D) \hookrightarrow W_{2,\mathcal{B}}^{4\xi}(D)\,.
\eqn
 Therefore, from \eqref{6}, \eqref{7}, \eqref{8}, and Duhamel's formula
$$
u(t)=e^{-tA}u^0+\int_0^t e^{-(t-s)A} h(s)\,\rd s\,, \quad t\in [0,T_m)\,,
$$
it follows that
\begin{align*}
\|u(t)\|_{W_{2,\mathcal{B}}^{4\xi}(D)}&\le \|e^{-tA}\|_{\mathcal{L}(W_{2,\mathcal{B}}^{4\xi}(D))}\|u^0\|_{W_{2,\mathcal{B}}^{4\xi}(D)}\\
&\qquad + c(\theta) \int_0^t \|e^{-(t-s)A} h(s)\|_{B_{1,1,\mathcal{B}}^{4\theta+\alpha}(D)} \\
& \le c + c(\theta) \int_0^t \|e^{-(t-s)A}\|_{\mathcal{L}(B_{1,1}^{\alpha}(D),B_{1,1,\mathcal{B}}^{4\theta+\alpha}(D))}\|  h(s)\|_{B_{1,1}^{\alpha}(D)}\,\rd s\\
&\le c(\theta)
\end{align*}
for $t\in [0,T_m)\cap [0,T_0]$. We have thus shown \eqref{1a} and the proof of Theorem~\ref{B} is complete according to Theorem~\ref{A}.
\end{proof}

\bibliographystyle{amsplain} 
\bibliography{3dTouchdown}

\end{document}